\newcommand\commentone[1]{\textcolor{black}{#1}}
\newcommand\commenttwo[1]{\textcolor{black}{#1}}
\newtheorem{theorem}{Theorem}[section]
\newtheorem{Lemma}[theorem]{Lemma}
\newtheorem{Proposition}[theorem]{Proposition}
\newtheorem{Corollary}[theorem]{Corollary}
\newtheorem{Remark}[theorem]{Remark}
\begin{document}
\title{A Monotone Discretization for the Fractional Obstacle Problem} 
\author[Rubing Han, Shuonan Wu, Hao Zhou]{Rubing Han, Shuonan Wu, Hao Zhou}
\date{\today}
\address{School of Mathematical Sciences, Peking University, Beijing, 100871, People's Republic of China.}
\email{hanrubing@pku.edu.cn, snwu@math.pku.edu.cn, 2301110061@pku.edu.cn}
\maketitle

\let\thefootnote\relax
\footnotetext{{\bf Funding:} This research was funded by the National Natural Science Foundation of China grant No. 12222101 and the Beijing Natural Science Foundation No. 1232007.} 

\begin{abstract}
We introduce a novel monotone discretization method for addressing obstacle problems involving the integral fractional Laplacian with homogeneous Dirichlet boundary conditions over bounded Lipschitz domains. 
\commenttwo{This problem is prevalent in mathematical finance, particle systems, and elastic theory.} 
By leveraging insights from the successful monotone discretization of the fractional Laplacian, we establish uniform boundedness, solution existence, and uniqueness for the numerical solutions of the fractional obstacle problem. 
We employ a policy iteration approach for efficient solution of discrete nonlinear problems and prove its finite convergence. 
Our improved policy iteration, adapted to solution regularity, demonstrates superior performance by modifying discretization across different regions. 
Numerical examples underscore the method's efficacy.
\end{abstract} 

\smallskip
\noindent \textbf{Keywords:} obstacle problem; fractional Laplacian; bounded Lipschitz domain; monotone discretization; policy iteration.

\smallskip

\noindent \textbf{MSC:} 35R11, 65N06, 65N12, 65N15.


\section{Introduction}


In this work, we consider the obstacle problem associated with the
integral form of the fractional Laplacian, referred to as the {\it
fractional obstacle problem}. 

For $n\geq 1$, let $\Omega\subset \mathbb{R}^n$ be a bounded Lipschitz
domain satisfying the exterior ball condition, with boundary denoted
as $\partial \Omega$. Additionally, $\Omega^c$ represents the
complement of $\Omega$. The specific form of the fractional obstacle
problem is as follows: Given two prescribed functions $f:\Omega
\rightarrow \mathbb{R}$ and the obstacle function $\psi : \mathbb{R}^n
\rightarrow \mathbb{R}$ with the nondegeneracy condition
$\psi\vert_{\Omega^c} < 0$, and $s\in(0,1)$, we seek a function
$u:\mathbb{R}^n\rightarrow \mathbb{R}$ that satisfies the following
nonlinear equation with homogeneous Dirichlet boundary conditions:
\begin{equation}\label{eq:fractional_obstacle}
    \left\{
\begin{aligned}
  \mathcal{G}[u] :=\min \left\{(-\Delta )^s u - f, u - \psi \right\}
  &=0,\; x\in\Omega,\\ u &= 0 ,\; x\in \Omega^c.
\end{aligned}
\right.
\end{equation}
Here, the integral fractional Laplacian of order $s\in (0,1)$, defined
by 
\begin{equation} \label{eq:fractional_laplace}
    (-\Delta)^s u(x) := C_{n,s} \text{P.V.} \int_{\mathbb{R}^n}
    \frac{u(x) - u(y)}{|x-y|^{n+2s}}\mathrm{d}y.
\end{equation}
The normalization constant is given by $C_{n,s}:= \frac{2^{2s}
s\Gamma(s+\frac{n}{2})}{\pi^{n/2} \Gamma(1-s)}$.

The fractional obstacle problem finds applications in various fields,
such as mathematical finance theory, systems of particles and elasticity problems
\cite{tankov2003financial,carrillo2016regularity,serfaty2018systems,signorini1933sopra,fernandez2022thin}. For a comprehensive
overview of obstacle problems, including traditional obstacle problems
and those related to integral-differential operators (of which the
fractional obstacle problem is a particular case), we refer to the
survey article \cite{ros2018obstacle}.



\commenttwo{For fractional operators, it has been shown that the maximum principle holds \cite{ros2016nonlocal}. 
In terms of numerical stability, it is desirable to preserve this property at the discrete level, known as the monotonicity of the scheme. 
Constructing monotone schemes is a natural advantage of finite difference methods.
For the Laplace operator $(-\Delta)$, the monotone difference scheme has been paid attention to by researchers for a long time, \cite{bramble1962formulation,van1974high}. In the context of the finite element method, achieving monotonicity in the discretization process depends on specific grid conditions \cite{xu1999monotone}.}

\commenttwo{Furthermore, considering the regularity of the solution is essential for the convergence of numerical schemes.}
The fractional obstacle problem has received significant attention in mathematics, particularly in the field of PDE regularity of solutions
and the free boundary. In this regard, a series of works
\cite{silvestre2007regularity,caffarelli2007extension,caffarelli2008regularity}
have established the H\"older regularity of $C^{1,s}(\mathbb{R}^n)$
for solutions to the $s$-order obstacle problem. For the case of a
bounded domain, it is expected that the interior regularity exhibits
the same $C^{1,s}$ H\"older regularity. However, lower regularity may
arise near the boundary of the domain. For the fractional Laplace
equation, a well-known result is that when the problem domain
satisfies the exterior ball condition and the right-hand side belongs
to $L^\infty$, the solution exhibits singularity near the boundary
$\partial \Omega$ in terms of $\mathrm{dist}(x,\partial \Omega)^s$,
resulting in global $C^s$ H\"older regularity. This characteristic is
observed widely in a large class of nonlocal elliptic equations, as
discussed in the survey article \cite{ros2016nonlocal}. Therefore, the
global regularity (boundary regularity) of the fractional obstacle
problem on bounded domains still requires careful investigation.

In recent years, numerous numerical algorithms have been developed for
solving fractional-order partial differential equations (PDEs). For
the fractional Laplace equation, the finite element method can be
employed, utilizing the variational formulation and accounting for the
low regularity near the boundary \cite{acosta2017fractional}.
Similarly, the fractional obstacle problem can be formulated as a
variational inequality problem, where the solution $u$ is sought to
minimize a functional subject to the constraint $u \geq \psi$
\cite{silvestre2007regularity}. To address this variational
inequality, \cite{borthagaray2019weighted} utilizes the finite element
method, establishing interior and boundary regularity results for
bounded fractional obstacle problems, and providing error estimates
based on these regularity results.

However, for
fractional operators, due to their non-local nature, it is challenging
to establish general conditions that guarantee the monotonicity of
finite element discretization. This challenge is further amplified
when dealing with the finite element discretization of nonlinear
problems.

\commenttwo{On the other hand, several finite difference discretizations that satisfy
monotonicity have been proposed
\cite{huang2014numerical,duo2018novel,duo2019accurate} for the fractional operator
$(-\Delta)^s$.} However, most of these works
focus on problems solved on structured grids and make overly strong
assumptions about the H\"older regularity of the true solution. In our
recent work \cite{han2022monotone}, we propose a monotone difference
scheme based on quadrature formulas, which can be applied to solve the
fractional Laplace equation on general bounded Lipschitz domains. This
work provides a comprehensive analysis of the scheme's consistency,
taking into account the actual regularity of the problem. Furthermore,
we obtain rigorous pointwise error estimates by utilizing discrete
barrier functions.

The objective of this study is to develop a monotone scheme for
solving the problem defined by equation \eqref{eq:fractional_obstacle}
on a bounded Lipschitz domain and to devise an efficient solution
solver. Building upon our previous work \cite{han2022monotone}, we
extend this discretization to the fractional obstacle problem. A
crucial aspect of our work is the introduction of the enhanced
discrete comparison principle, as demonstrated in Lemma
\ref{lm:enhanced-DCP}, for the discrete fractional Laplacian operator.
By utilizing this result, we establish that the discretization scheme
maintains monotonicity and satisfies the discrete maximum principle,
guaranteeing the uniqueness of the discrete solution. Additionally, we
employ the discrete Perron method to establish the existence and
uniform boundedness of the discrete solution.

In addition, this study explores the policy iteration method for
solving nonlinear discrete problems. Leveraging the enhanced discrete
comparison principle, we establish the finite convergence of the
standard policy iteration method. Furthermore, the paper provides an
in-depth discussion on the relationship between the discretization
scheme for the fractional Laplace equation and the regularity results
of the problem. Based on this discussion, we propose an improved
policy iteration method that considers the low regularity near the
contact set of the problem and selects different discrete scales for
different regions. Numerical results demonstrate the superior
performance of the improved method compared to the standard policy
iteration method.

The remaining part of the paper is organized as follows. In Section
\ref{sec:prelim}, we introduce some necessary preliminary results. In
Section \ref{sec:discretization}, we present the monotone
discretization for the fractional obstacle problem and discuss the
properties of the scheme.  In Section \ref{sec:solver}, we apply the
policy iteration method for solving nonlinear discrete problems and
prove the convergence of the iteration. Additionally, taking into
account the specific regularity of the problem, we propose an improved
policy iteration method that exhibits better numerical performance in
practical computations.  Numerical examples are presented in Section
\ref{sec:numerical} to support our theoretical results. Furthermore,
for the boundary H\"older regularity of the problem, a simple proof is
provided in Appendix \ref{ap:holder}.

\section{Preliminary results} \label{sec:prelim}
In this section, some preliminary knowledge and results will be introduced, including notation, definitions of function spaces, and regularity results.
Given an open set $U\subset \mathbb{R}^n$ with $\partial U \neq \varnothing$, the $C^\beta(U)$ H\"older seminorm, with $\beta > 0$, is denoted by $|\cdot |_{C^\beta(U)}$. More precisely, for $\beta = k +\commentone{t}$ with $k$ integer and $\commentone{t} \in (0,1]$, define
\[
\begin{aligned}
	|w|_{C^\beta(U)}  = |w|_{C^{k,\commentone{t}}(U)}&: = \sup\limits_{x,y\in U, x\neq y} \frac{|D^k w(x) - D^kw(y)|}{|x-y|^{\commentone{t}}},\\
	\|w\|_{C^\beta(U)} &: = \sum_{l=0}^{k} \left(\sup\limits_{x\in U} |D^l w(x)| \right) + |w|_{C^\beta(U)}.
\end{aligned}
\]

As usual, the nonessential constant denoted by $C$ may vary from line to line. $X\lesssim Y$ means that there exists a constant $C>0$ such that $X\leq CY$, and $X\eqsim Y$ means that $X\lesssim Y$ and $Y \lesssim X$. 

\subsection{Regularity: the linear problem}
Here are some well-known conclusions regarding the regularity of the integral fractional Laplacian:
\begin{equation}\label{eq:Deltas}
	(-\Delta)^s w_f = f \text{ in } \Omega,\qquad w_f = 0 \; \text{in } \Omega^c. 
\end{equation}

The first lemma pertains to the interior regularity of fractional harmonic functions. This result establishes that the solution of \eqref{eq:fractional_obstacle} is smooth within the region unaffected by $\psi$, or in other words, where there is no contact with $\psi$.
\begin{Lemma}[balayage] \label{lm:balayage}
	Let $w\in L^\infty(\mathbb{R}^n)$ be such that $(-\Delta)^s w = 0$ in $B_R$. Then, $w\in C^\infty(B_{R/2})$.
\end{Lemma}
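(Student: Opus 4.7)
The plan is to exploit the classical explicit Poisson formula for the fractional Laplacian on a ball, which recovers an $s$-harmonic function in the ball from its exterior values. Fix any $x_{0}\in B_{R/2}$ and choose $\rho>0$ with $\overline{B_{\rho}(x_{0})}\subset B_{R}$; for instance $\rho=R/4$ works. Since $w\in L^{\infty}(\mathbb{R}^{n})$ and $(-\Delta)^{s}w=0$ in $B_{\rho}(x_{0})$, I would invoke the Riesz/Bogdan representation
\begin{equation*}
w(x)=c_{n,s}\int_{\mathbb{R}^{n}\setminus B_{\rho}(x_{0})}\left(\frac{\rho^{2}-|x-x_{0}|^{2}}{|y-x_{0}|^{2}-\rho^{2}}\right)^{s}\frac{w(y)}{|x-y|^{n}}\,dy,\qquad x\in B_{\rho}(x_{0}),
\end{equation*}
so that $w$ coincides inside $B_{\rho}(x_{0})$ with the Poisson integral of its exterior trace.

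The next step is to differentiate the right-hand side under the integral sign. For $x\in B_{\rho/2}(x_{0})$ the Poisson kernel $P_{\rho,x_{0}}(x,y)$ is a $C^{\infty}$ function of $x$, because $|x-y|\geq \rho/2$ for $y$ outside $B_{\rho}(x_{0})$ and the factor $(\rho^{2}-|x-x_{0}|^{2})^{s}$ is a positive smooth function of $x$ there. A direct calculation gives, for every multi-index $\alpha$,
\begin{equation*}
|\partial_{x}^{\alpha}P_{\rho,x_{0}}(x,y)|\leq C_{\alpha,\rho}\bigl(1+|y-x_{0}|\bigr)^{-n-2s}\quad\text{for } \rho<|y-x_{0}|,\; |y-x_{0}|\geq 2\rho,
\end{equation*}
while on the annulus $\{\rho<|y-x_{0}|<2\rho\}$ the only singularity is the integrable one $(|y-x_{0}|-\rho)^{-s}$ since $s\in(0,1)$. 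Combined with $\|w\|_{L^{\infty}}<\infty$, dominated convergence legitimises differentiation of any order under the integral, yielding $w\in C^{\infty}(B_{\rho/2}(x_{0}))$. Since $x_{0}\in B_{R/2}$ was arbitrary, this establishes $w\in C^{\infty}(B_{R/2})$.

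The main obstacle is the clean justification of the Poisson representation itself when $w$ is assumed merely bounded, because the pointwise definition \eqref{eq:fractional_laplace} of $(-\Delta)^{s}w$ requires some local regularity of $w$ (for instance, $C^{2s+\varepsilon}$) near the evaluation point, which is not part of the hypotheses. I would bypass this either by reading $(-\Delta)^{s}w=0$ in the distributional sense and invoking the well-known characterisation of $s$-harmonic distributions on $B_{\rho}(x_{0})$ as Poisson integrals of their exterior data, or by a mollification argument: set $w_{\varepsilon}=w*\eta_{\varepsilon}$ with $\eta_{\varepsilon}$ a standard mollifier of support radius $\varepsilon$, note that $w_{\varepsilon}\in C^{\infty}\cap L^{\infty}$ satisfies $(-\Delta)^{s}w_{\varepsilon}=0$ in $B_{\rho-\varepsilon}(x_{0})$ and is therefore given by the Poisson integral on that ball, derive $C^{k}$ estimates on $B_{\rho/2}(x_{0})$ uniform in $\varepsilon$ from the kernel bounds above, and pass to the limit $\varepsilon\to 0$ using $w_{\varepsilon}\to w$ in $L^{1}_{\mathrm{loc}}$.
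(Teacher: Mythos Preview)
The paper does not actually prove Lemma~\ref{lm:balayage}; it is listed among the ``well-known conclusions regarding the regularity of the integral fractional Laplacian'' in Section~\ref{sec:prelim} and stated without proof or specific citation. So there is no paper proof to compare against.

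Your argument is the standard one and is correct. The explicit Poisson representation on a ball, the observation that the kernel is $C^\infty$ in $x$ on $B_{\rho/2}(x_0)$ with an integrable $(|y-x_0|-\rho)^{-s}$ singularity near $\partial B_\rho(x_0)$ and $|y|^{-n-2s}$ decay at infinity, and dominated convergence for differentiation under the integral all combine cleanly. You also correctly flag the one genuine subtlety---that the pointwise definition of $(-\Delta)^s w$ presupposes some local regularity of $w$---and your mollification remedy is the right fix: $w_\varepsilon = w*\eta_\varepsilon$ is smooth, bounded, $s$-harmonic on a slightly smaller ball, hence given by its Poisson integral there, and the uniform $C^k$ bounds from the kernel estimates survive the limit $\varepsilon\to 0$. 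Nothing is missing.
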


For the global regularity result, however, it can be proven that the solution belongs to $C^s(\mathbb{R}^n)$, and this result is sharp \cite{ros2014dirichlet}. This is due to the limited regularity at the boundary and suggests the need for a more comprehensive discussion using weighted Hölder spaces, as discussed in \cite{ros2014dirichlet, acosta2017fractional}.

\subsection{Regularity: the fractional obstacle problem} \label{subsec:regularity}
Next, some regularity results for the fractional obstacle problem are presented. It is worth noting that, when discussing regularity, the assumption that the forcing term $f$ is zero can be made without loss of generality. This is because the general solution can be decomposed as follows: 
\[
u = u_1 + w_f,
\]
where $u_1$ solves \eqref{eq:fractional_obstacle} with zero forcing ($f = 0$) and obstacle $\psi_1 := \psi - w_f$. Here, $w_f$ is the solution to the linear problem \eqref{eq:Deltas}.

The contact set is defined as follows:
\[
\Lambda = \{x\in \Omega: u(x) = \psi(x)\}.
\]
The first result on the interior H\"older regularity of the fractional obstacle problem was established by Silvester in $\mathbb{R}^n$ \cite{silvestre2007regularity}. When considering the fractional obstacle problem in bounded domains, Nochetto, Borthagaray, and Salgado in \cite{borthagaray2019weighted} derived a corresponding result under the assumption:
\begin{equation} \label{eq:ass-contact-dist}
\text{dist}(\Lambda,\partial \Omega) = r_0 > 0.
\end{equation}
The main technique employed in their work is the use of truncation functions. The result is presented as follows.

\begin{Proposition}[interior H\"older regularity] \label{pp:interior-Holder}
    Let $\Omega$ be a bounded Lipschitz domain and $\psi\in C(\bar{\Omega}) \cap C^{2,1}(\Omega)$ with nondegeneracy condition $\psi|_{\Omega^c} < 0$. Then the solution $u$ of \eqref{eq:fractional_obstacle}, with $f=0$, satisfies $u\in C^{1,s}(D)\;\text{\commentone{ for all }}$ \commentone{$ D\subset \Omega$ is compact}.
\end{Proposition}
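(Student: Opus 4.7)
My plan is to reduce this statement to Silvestre's $\mathbb{R}^n$-result for the fractional obstacle problem \cite{silvestre2007regularity}, which delivers $C^{1,s}$ regularity when the obstacle is $C^{2,1}$. The bridge from \eqref{eq:fractional_obstacle} to the global problem is a truncation argument, and the nondegeneracy $\psi|_{\Omega^c}<0$ is used precisely to confine the contact set $\Lambda$ to the interior of $\Omega$, so that such a truncation is meaningful.

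First I would verify that $\Lambda$ is compactly contained in $\Omega$. Since $\psi\in C(\bar\Omega)$ and $\psi|_{\Omega^c}<0$, compactness of $\partial\Omega$ yields $\delta>0$ with $\psi\le -\delta$ on $\partial\Omega$, hence $\psi\le -\delta/2$ in a one-sided neighborhood of $\partial\Omega$ inside $\Omega$. The solution $u$ is continuous up to $\partial\Omega$ and vanishes on $\Omega^c$ by the global $C^s$ boundary regularity recalled after Lemma \ref{lm:balayage}, so $u\ge \psi+\delta/4>\psi$ in a strip $\{x\in\Omega:\mathrm{dist}(x,\partial\Omega)<r_0\}$. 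This gives $\mathrm{dist}(\Lambda,\partial\Omega)\ge r_0$, recovering the hypothesis \eqref{eq:ass-contact-dist}. At any point of $\Omega\setminus\Lambda$ the equation \eqref{eq:fractional_obstacle} locally reduces to $(-\Delta)^s u=0$, so Lemma \ref{lm:balayage} upgrades $u$ to $C^\infty$ there, handling any part of a compact $D\subset\Omega$ that does not meet $\Lambda$.

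For the remaining part of $D$ I would pick a cutoff $\eta\in C_c^\infty(\Omega)$ with $\eta\equiv 1$ on an open set $N$ containing $\Lambda\cup D$ and with $\bar N\subset\Omega$, and extend $\psi|_{\bar N}$ to a function $\tilde\psi\in C^{2,1}(\mathbb{R}^n)$ that is made sufficiently negative outside $N$. Writing $u=\eta u+(1-\eta)u$, the tail $g:=-(-\Delta)^s((1-\eta)u)$ is $C^\infty$ on $N$ because $(1-\eta)u$ is supported away from $N$ and the kernel in \eqref{eq:fractional_laplace} is smooth off the diagonal. Consequently $\eta u$ satisfies a fractional obstacle problem on $N$ with smooth forcing $g$ and $C^{2,1}$ obstacle $\tilde\psi$; absorbing $g$ by subtracting an explicit smooth solution of a linear fractional Laplace equation and then invoking Silvestre's global $C^{1,s}(\mathbb{R}^n)$ estimate transfers $C^{1,s}$ regularity back to $u$ on $D$.

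The step I expect to require the most care is the matching between the localized problem on $N$ and the Silvestre global one: the extension $\tilde\psi$ must be chosen so that the $\mathbb{R}^n$-solution stays strictly above $\tilde\psi$ outside $N$ (so the global contact set is exactly $\Lambda$), the smooth forcing $g$ must be handled cleanly, and the resulting estimate must be pulled back to $u|_D$ via a comparison principle. The nondegeneracy condition and the smoothness of $g$ make each of these checks manageable, but they form the technical heart of the argument.
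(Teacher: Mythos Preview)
Your proposal is correct and follows essentially the same route the paper indicates: the paper does not prove Proposition~\ref{pp:interior-Holder} directly but cites \cite{borthagaray2019weighted}, noting that ``the main technique employed in their work is the use of truncation functions'' to reduce to Silvestre's $\mathbb{R}^n$ result under the separation hypothesis~\eqref{eq:ass-contact-dist}. Your sketch reproduces exactly this strategy---deriving~\eqref{eq:ass-contact-dist} from the nondegeneracy condition via boundary continuity of $u$, then localizing by a cutoff so that the tail contributes only a smooth forcing, and invoking \cite{silvestre2007regularity}---and you correctly flag the matching of the localized and global problems as the one place demanding care.
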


Similarly, utilizing truncation function techniques, \cite{borthagaray2019weighted} provided the following boundary H\"older regularity result:
\begin{Proposition}[boundary H\"older regularity] \label{pp:boundary-Holder}
    Let $\Omega$ be a bounded Lipschitz domain satisfying the exterior ball condition, and $\psi\in C(\bar{\Omega}) \cap C^{2,1}(\Omega)$ with nondegeneracy condition $\psi|_{\Omega^c} < 0$. Let $u$ solves \eqref{eq:fractional_obstacle}, then
    \[
    \|u\|_{C^{s}(\mathbb{R}^n)}\leq C(\Omega, f,\psi).
    \]
\end{Proposition}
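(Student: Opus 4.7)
The plan is to reduce to $f=0$, exploit the nondegeneracy of the obstacle together with the sign of $u$ to separate the contact set from $\partial\Omega$, and then extract the boundary $C^s$ estimate via a fractional torsion barrier combined with interior regularity. Following the decomposition recalled in Section \ref{subsec:regularity}, I write $u = u_1 + w_f$, where $w_f$ solves \eqref{eq:Deltas}; linear boundary regularity for $(-\Delta)^s$ under the exterior ball condition gives $\|w_f\|_{C^s(\mathbb{R}^n)} \le C(\Omega)\|f\|_{L^\infty(\Omega)}$. Then $u_1$ solves \eqref{eq:fractional_obstacle} with zero forcing and modified obstacle $\psi_1 := \psi - w_f \in C(\bar\Omega)\cap C^{2,1}(\Omega)$, and since $w_f$ vanishes on $\Omega^c$ the nondegeneracy $\psi_1|_{\Omega^c} < 0$ is inherited. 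Relabelling $u_1$ as $u$, it suffices to prove the estimate under $f\equiv 0$. In that case, $(-\Delta)^s u \ge 0$ in $\Omega$ and $u\equiv 0$ in $\Omega^c$, together with the maximum principle, force $u \ge 0$ throughout $\mathbb{R}^n$; combining this sign with continuity of $\psi_1$ on $\bar\Omega$ and the strict negativity $\psi_1|_{\Omega^c} < 0$ produces $r_0,c_0>0$ and a boundary strip $N_{r_0} := \{x\in\Omega : \mathrm{dist}(x,\partial\Omega) < r_0\}$ on which $\psi_1 \le -c_0$. Hence $u > \psi_1$ on $N_{r_0}$, the contact set $\Lambda$ is compactly contained in $\Omega$, and $(-\Delta)^s u = 0$ throughout $N_{r_0}$.

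To convert this information into boundary decay, I introduce the fractional torsion function $T$ defined by $(-\Delta)^s T = 1$ in $\Omega$ and $T = 0$ in $\Omega^c$. The same linear regularity gives $T \in C^s(\mathbb{R}^n)$ with $T(x) \lesssim \mathrm{dist}(x,\partial\Omega)^s$ and $T > 0$ in $\Omega$. Because the compact set $K := \{x\in\bar\Omega : \psi_1(x) \ge 0\}$ is contained in $\Omega\setminus\overline{N_{r_0}}$, one has $T_{\min} := \min_K T > 0$, so choosing $M := \|\psi_1^+\|_{L^\infty(\Omega)}/T_{\min}$ makes $MT$ an admissible supersolution for the obstacle problem: $(-\Delta)^s(MT) = M \ge 0 = f$, $MT \ge \psi_1$ in $\Omega$, and $MT \equiv 0 \equiv u$ in $\Omega^c$. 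The comparison principle characterising $u$ as the smallest supersolution lying above the obstacle then gives $u \le MT$, so that $0 \le u(x) \le C\,\mathrm{dist}(x,\partial\Omega)^s$.

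The global $C^s$ estimate follows by a standard case analysis on pairs $x,y$: the exterior pair is trivial; the mixed case $x\in\Omega^c$, $y\in\Omega$ uses $|u(y)|\lesssim d(y)^s \le |x-y|^s$; and for $x,y\in\Omega$, either $|x-y|\gtrsim \min(d(x),d(y))$, in which case the boundary decay alone suffices, or $|x-y|$ is small compared with $d(x)$, in which case I rescale $u$ on $B_{d(x)/2}(x)$, where $(-\Delta)^s u = 0$, the local $L^\infty$ bound $|u|\lesssim d(x)^s$ holds, and the nonlocal tail is uniformly controlled by $0 \le u \le MT \in L^\infty$; this yields a scale-invariant $C^s$ estimate, with deeper-interior points handled directly by Proposition \ref{pp:interior-Holder}. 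The main obstacle is the separation $\mathrm{dist}(\Lambda,\partial\Omega)>0$ established in the first paragraph, which is not a priori available: it hinges on combining the sign $u\ge 0$, accessible only after reducing to $f=0$, with the strict hypothesis $\psi|_{\Omega^c}<0$ and the continuity of $\psi$ up to $\partial\Omega$. Once this separation is in place the problem is effectively the linear fractional Dirichlet problem near $\partial\Omega$, and the torsion-function barrier supplies precisely the $d^s$ decay required to close the global Hölder estimate.
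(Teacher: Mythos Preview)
Your approach mirrors the paper's overall strategy---establish separation of the contact set from $\partial\Omega$, derive the boundary decay $|u|\lesssim \delta^s$ via a barrier, then upgrade to a global $C^s$ bound by interior rescaling---but differs in two respects. First, for the barrier the paper (Appendix~\ref{ap:holder}) invokes the radial supersolution of Lemma~\ref{lm:supersolution}, translated and scaled at each boundary point, whereas you use a single global barrier, the torsion function $T$, together with the characterisation of $u$ as the least supersolution above the obstacle; both routes deliver $|u|\lesssim\delta^s$ and the torsion argument is arguably cleaner. Second, you explicitly derive $\mathrm{dist}(\Lambda,\partial\Omega)>0$ from the nondegeneracy $\psi|_{\Omega^c}<0$ and the continuity of $\psi$ up to $\partial\Omega$ after reducing to $f=0$; the paper instead takes this separation as the standing assumption~\eqref{eq:ass-contact-dist}, which in exchange lets it weaken the hypothesis to $\psi\in L^\infty(\Omega)$ and work directly with general $f\in L^\infty$ without reducing to zero forcing.

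One caution: your reduction step asserts $\psi_1=\psi-w_f\in C(\bar\Omega)\cap C^{2,1}(\Omega)$, and you later rely on Proposition~\ref{pp:interior-Holder} (which needs a $C^{2,1}$ obstacle) for the deep-interior region. The $C(\bar\Omega)$ part is fine since $w_f\in C^s(\mathbb{R}^n)$, but $w_f\in C^{2,1}(\Omega)$ would require far more regularity on $f$ than $L^\infty$; with only $f\in L^\infty$, interior regularity for $(-\Delta)^s$ gives at best $w_f\in C^{2s-\varepsilon}_{\mathrm{loc}}$. This is easily repaired: in the deep interior you only need a $C^s$ bound, not $C^{1,s}$, so replace the appeal to Proposition~\ref{pp:interior-Holder} by the rescaled $L^\infty$-level estimate of Proposition~\ref{lm:improved-interior-Holder} (which needs only $f,\psi\in L^\infty$), or avoid the reduction to $f=0$ altogether as the paper does.
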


In fact, in the boundary Hölder regularity estimate, the smoothness assumption on $\psi$ can be relaxed to $\psi \in L^\infty(\Omega)$. The main idea is to revert back to the fundamental theoretical framework of boundary regularity \cite{ros2014dirichlet} and explore its extension to the fractional obstacle problem. Additionally, compared to \cite{borthagaray2019weighted}, the proof is more direct. The detailed proof of this new approach is included in the Appendix \ref{ap:holder}. 

Before concluding this section, let us provide an intuitive summary of the implications of the regularity results for the design of numerical schemes: 
\begin{enumerate}
\item Lemma \ref{lm:balayage} (balayage) indicates that the solutions are smooth within both $\Lambda$ and $\Omega \setminus \Lambda$ if $\psi$ is smooth. Therefore, the standard discretization methods would be suitable for these regions.
\item By combining the interior regularity discussed in Proposition \ref{pp:interior-Holder}, it suggests that the smoothness across the contact boundary decreases to $C^{1,s}$. As a result, discretization methods that rely on higher-order derivatives would be inappropriate for this region.
\item The boundary regularity result, Proposition \ref{pp:boundary-Holder}, is similar to that of linear problems. Thus the techniques used for linear problems, such as the regularity under weighted norms, are still applicable. From a numerical perspective, this phenomenon motivates the use of graded grids to enhance the convergence order, as explored in the work by \cite{acosta2017fractional, han2022monotone}. 
This issue will be revisited in Section \ref{sec:numerical}.
\end{enumerate}

\section{A monotone discretization} \label{sec:discretization}
For simplicity, it is assumed that $\Omega$ is a polygon in 2D and a polyhedron in 3D. Let $\mathcal{T}_h$ be a triangulation of the
computation domain $\Omega$, i.e. $\cup_{T\in \mathcal{T}_h}
\overline{T}  = \overline{\Omega}$. For $T\in \mathcal{T}_h$, let
$h_T$ denote the diameter of element $T$, $\rho_T$ denote the radius
of the largest {inscribed} ball contained in $T$. The triangulation is referred to as \emph{local quasi-uniform} if there exists a constant $\lambda_1$ such that
\[
h_T \leq \lambda_1 h_{T'} \quad \forall \overline{T} \cap
\overline{T'} \neq \varnothing.
\]
Meanwhile, the triangulation $\mathcal{T}_h$ is called \emph{shape
regular}, if there is $\lambda_2 >0 $ such that $h_T \leq \lambda_2
\rho_T$.  For $n\geq 2$, the shape regular property implies the local
quasi-uniform properties, see \cite{brenner2008mathematical}. Let
$\mathcal{N}_h$ denote the node set of $\mathcal{T}_h$ and
$\mathcal{N}_h^b :=\left\{x_i \in \mathcal{N}_h\; : \; x_i\in \partial
\Omega\right\}$ be the collection of boundary nodes, and interior node
set is denoted by $\mathcal{N}_h^0:=\mathcal{N}_h \setminus
\mathcal{N}_h^b$.

Let $\mathbb{V}_h := \left\{ u\in C(\mathbb{R}^n): \; u\vert_T \in
\mathcal{P}_1(T), u\vert_{\Omega^c} = 0 \right\}$, where
$\mathcal{P}_1(T)$ denotes the linear function collection defined on
element $T$.  In \cite{han2022monotone}, a monotone discretization for the integral fractional Laplacian was proposed and a corresponding pointwise error estimate was conducted under realistic H\"older regularity assumption. In this work, the monotone discretization for the integral fractional Laplacian is applied, denoted by $(-\Delta)^s_h$:  

\begin{equation}\label{eq:Delta-hs}
  (-\Delta)_h^s [v_h](x_i) : = \underbrace{-\kappa_{n,s,i}
  \frac{\Delta_{FD} v_h(x_i;H_i)}{H_i^{2s}}}_{:=\mathcal{L}^S_h
  [v_h](x_i)} +\underbrace{ \int_{\Omega_i^c} \frac{v_h(x_i) -
  v_h(y)}{|x_i-y|^{n+2s}}\mathrm{d}y}_{:=\mathcal{L}_h^T[v_h](x_i)}\quad
  \forall v_h\in\mathbb{V}_h.
\end{equation}
Here, $\Omega_i \subset \Omega$ is a star-shaped domain centered at
$x_i$ satisfying some symmetrical condition \cite[Equ. (3.5) and
(3.6)]{han2022monotone} with a typical scaling 
\begin{equation} \label{eq:Hi}
H_i := h_i^{\alpha_i} \delta_i^{1-\alpha_i} \quad \forall x_i \in \mathcal{N}_h^0,
\end{equation}
where $h_i$ is the mesh size around $x_i$, and $\delta_i: =
\text{dist}(x_i,\partial \Omega)$. A concrete example that satisfies
the condition is the $n$-cubic domain $\Omega_i = x_i + [-H_i,
H_i]^n$, which is also utilized in our numerical experiments.  The
parameter $\alpha_i$ is carefully selected to address the regularity
concern of integral fractional Laplacian \cite{ros2014dirichlet,
han2022monotone}.  

The singular integral within $\Omega_i$ is approximated using the
finite difference method, denoted as $\Delta_{FD} u(x_i; H_i) :=
\sum_{j=1}^{n} u(x_i + H_ie_j) - 2u(x_i) + u(x_i- H_ie_j)$, where
$e_j$ represents the unit vector of the $j$th coordinate. The
coefficient $\kappa_{n,s,i}$ is a known positive constant, provided in
\cite[Equ. (3.10)]{han2022monotone}.

\subsection{Properties of $(-\Delta)^s_h$}
In this subsection, several fundamental properties of the $(-\Delta)^s_h$ operator are revisited and established. These properties will be utilized in subsequent discussions. One of the key features is its monotonicity,
as proven in \cite[Lemma 3.4]{han2022monotone}, which is closely related to the matrix discretization being an $M$-matrix. An even stronger property of $(-\Delta)^s_h$ will be explored: the discretization matrix exhibits a strong diagonal dominance. Additionally, the consistency of $(-\Delta)^s_h$ will be discussed.

To begin with, let us revisit the discrete barrier function and
monotone property given in \cite[Lemma 3.4 \& Lemma
3.5]{han2022monotone}. 

\begin{Lemma}[discrete barrier function]\label{lm:discrete-barrier}
Let $b_h\in \mathbb{V}_h$ satisfy $b_h(x_i):=1$ for all $x_i \in \mathcal{N}_h^0$. It follows that
\begin{equation} \label{eq:discrete-barrier}
    (-\Delta)^s_h[b_h](x_i)\geq C\delta_i^{-2s}>0\quad \forall x_i\in
    \mathcal{N}_h^0,
\end{equation}
where the constant $C$ depends only on $s$ and $\Omega$.
\end{Lemma}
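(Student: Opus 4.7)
The plan is to exploit the natural splitting of the discrete operator in \eqref{eq:Delta-hs} into the singular (short-range) part $\mathcal{L}^S_h$ and the tail part $\mathcal{L}^T_h$, show that the short-range contribution is nonnegative, and then extract the claimed $\delta_i^{-2s}$ behavior from the tail by restricting the integration domain to $\Omega^c$ and using the exterior ball condition. This mirrors the classical continuous computation of the barrier function for the integral fractional Laplacian.

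First, I would record the structural properties of $b_h$: since $b_h$ is piecewise linear on $\mathcal{T}_h$ with nodal values equal to $1$ on $\mathcal{N}_h^0$ and $0$ on $\mathcal{N}_h^b \cup \Omega^c$, convex combination along each simplex gives $0 \leq b_h(y) \leq 1$ for all $y\in\mathbb{R}^n$, with $b_h(y)=0$ on $\Omega^c$. In particular the finite difference stencil points $x_i \pm H_i e_j$ lie in $\Omega_i \subset \Omega$, so $b_h(x_i \pm H_i e_j) \leq 1 = b_h(x_i)$. Consequently
\[
\Delta_{FD} b_h(x_i;H_i) = \sum_{j=1}^n \bigl(b_h(x_i+H_ie_j)+b_h(x_i-H_ie_j)-2\bigr) \leq 0,
\]
which, combined with the positivity of $\kappa_{n,s,i}$, yields $\mathcal{L}^S_h[b_h](x_i) \geq 0$.

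Next, I would bound the tail. Because the integrand $(1 - b_h(y))/|x_i-y|^{n+2s}$ is nonnegative on all of $\Omega_i^c$, and $\Omega^c \subset \Omega_i^c$, I restrict integration to $\Omega^c$, on which $b_h \equiv 0$:
\[
\mathcal{L}^T_h[b_h](x_i) \;\geq\; \int_{\Omega^c} \frac{1}{|x_i - y|^{n+2s}} \,\mathrm{d}y.
\]
To lower bound this integral by $C\delta_i^{-2s}$, I would invoke the exterior ball condition: choose $x^* \in \partial\Omega$ realizing $\delta_i$ and an exterior ball $B(y_0,r_0)\subset\Omega^c$ tangent to $\partial\Omega$ at $x^*$. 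When $\delta_i \lesssim r_0$ the scaling $y = x_i + \delta_i z$ reduces the estimate to the integral of $|z|^{-(n+2s)}$ over a rescaled complement that contains a ball of controlled location and radius $\gtrsim 1$, giving the factor $\delta_i^{-2s}$ cleanly; when $\delta_i \gtrsim r_0$ (i.e., $x_i$ is well inside $\Omega$), both $\delta_i$ and $|x_i - y|$ for $y\in B(y_0,r_0)$ are comparable to $\operatorname{diam}(\Omega)$, so the bound becomes a uniform constant, still $\gtrsim \delta_i^{-2s}$ since $\delta_i$ is bounded above. Adding the two contributions concludes the proof.

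The only non-routine step is the last one: carefully using the exterior ball condition to produce the $\delta_i^{-2s}$ scaling uniformly in $x_i$. The algebraic manipulations (dyadic decomposition of $\Omega^c$ around the nearest boundary point, or equivalently the scaling argument above) are standard in the nonlocal literature and do not depend on the discretization, so the main work is bookkeeping the constants so that they depend only on $s$ and $\Omega$ (through $r_0$ and $\operatorname{diam}\Omega$), matching the claim of the lemma.
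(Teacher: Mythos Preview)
Your argument is correct and follows the natural line: nonnegativity of the singular part from $b_h\le 1$, then a lower bound on the tail by restricting to $\Omega^c$ and invoking the exterior ball condition to extract the $\delta_i^{-2s}$ scaling. Note, however, that the paper does not actually prove this lemma here; it is merely restated (``revisited'') from \cite[Lemma~3.5]{han2022monotone}, so there is no in-paper proof to compare against. Your sketch is precisely the kind of argument one expects in that reference, and the only cosmetic point is that the stencil points $x_i\pm H_ie_j$ lie on $\partial\Omega_i$ rather than in the open set $\Omega_i$ for the cube example---but since $0\le b_h\le 1$ holds on all of $\mathbb{R}^n$, this does not affect the inequality $\Delta_{FD}b_h(x_i;H_i)\le 0$.
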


\begin{Lemma}[monotonicity of $(-\Delta)^s_h$]\label{lm:monotone-Delta-h}
Let $v_h, w_h\in \mathbb{V}_h$. If $v_h - w_h$ attains a non-negative
  maximum at an interior node $x_i \in\mathcal{N}_h^0$, then
  $(-\Delta)^s_h[v_h](x_i)\geq (-\Delta)^s_h[w_h](x_i)$.
\end{Lemma}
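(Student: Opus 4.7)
The plan is to exploit the obvious linearity of $(-\Delta)^s_h$ (both the finite-difference piece $\mathcal{L}^S_h$ and the tail integral $\mathcal{L}^T_h$ act linearly on $\mathbb{V}_h$) to reduce the claim to showing $(-\Delta)^s_h[\phi](x_i) \geq 0$ for $\phi := v_h - w_h \in \mathbb{V}_h$, where $\phi$ attains a non-negative maximum at the interior node $x_i$. Since $\phi$ is piecewise linear on $\mathcal{T}_h$ and vanishes on $\Omega^c$, its maximum over $\mathbb{R}^n$ is attained at a node, and by hypothesis $M := \phi(x_i) = \max_{y \in \mathbb{R}^n} \phi(y) \geq 0$. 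This pointwise bound $\phi(y) \le M$ is the only fact I will use from the hypothesis.

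Next I would handle the two summands separately. For the tail $\mathcal{L}^T_h[\phi](x_i) = \int_{\Omega_i^c} (\phi(x_i)-\phi(y))|x_i-y|^{-n-2s}\,dy$, the integrand is pointwise non-negative because $\phi(x_i) - \phi(y) = M - \phi(y) \geq 0$ for every $y \in \mathbb{R}^n$ (in particular on $\Omega \cap \Omega_i^c$, while on $\Omega^c$ we use $\phi \equiv 0$ together with $M \geq 0$); hence $\mathcal{L}^T_h[\phi](x_i) \geq 0$. For the local piece, the stencil points $x_i \pm H_i e_j$ lie in the closure of $\Omega_i \subset \Omega$ by the construction of $\Omega_i$, so the pointwise maximum bound applies there, giving
\[
\Delta_{FD}\phi(x_i; H_i) = \sum_{j=1}^n \bigl[\phi(x_i + H_i e_j) + \phi(x_i - H_i e_j) - 2\phi(x_i)\bigr] \leq \sum_{j=1}^n (M + M - 2M) = 0.
\]
Combined with $\kappa_{n,s,i} > 0$ and $H_i > 0$, this yields $\mathcal{L}^S_h[\phi](x_i) = -\kappa_{n,s,i} H_i^{-2s}\Delta_{FD}\phi(x_i;H_i) \geq 0$. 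Adding the two non-negative contributions gives $(-\Delta)^s_h[\phi](x_i) \geq 0$, and undoing the linearity step finishes the proof.

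There is really no serious obstacle; the only point that deserves care is the justification that $\phi$ attains its global (over $\mathbb{R}^n$) maximum at the node $x_i$ rather than merely its \emph{nodal} maximum, which is what the hypothesis literally supplies. This follows from the piecewise-linear structure of elements of $\mathbb{V}_h$, augmented by the zero extension outside $\Omega$: on each simplex $T$ the affine function $\phi|_T$ attains its extrema at the vertices of $T$, and outside $\Omega$ one has $\phi \equiv 0 \leq M$. Once this global bound is in place, both the non-locality of $\mathcal{L}^T_h$ (which samples $\phi$ on all of $\mathbb{R}^n$) and the possible non-nodal nature of the stencil points $x_i \pm H_i e_j$ (which lie inside elements rather than on vertices) are handled uniformly, and the argument closes cleanly.
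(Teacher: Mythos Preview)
Your proof is correct. The paper does not actually prove this lemma here---it is quoted from \cite[Lemma~3.5]{han2022monotone}---but your argument (reduce by linearity to $\phi:=v_h-w_h$, use the piecewise-linear structure to upgrade the nodal maximum to a global bound $\phi\le\phi(x_i)$ on $\mathbb{R}^n$, then check the sign of $\mathcal{L}^S_h$ and $\mathcal{L}^T_h$ separately) is the standard and essentially only route, and coincides with what the cited reference does.
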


The above monotonicity result is insufficient for the obstacle
problem, as the fractional order operator equation in the obstacle
problem holds only in certain regions of the domain. Let us denote the
resulting discrete matrix of $(-\Delta)_h^s$ as ${\bf L} \in
\mathbb{R}^{N\times N}$, where $N$ is the number of interior nodes.
Below, it will be shown that ${\bf L}$ is a strongly diagonal dominant $M$-matrix.

\begin{Lemma}[strongly diagonal dominant $M$-matrix]
  \label{lm:L-property} 
  The discrete matrix ${\bf L} \in \mathbb{R}^{N \times N}$ satisfies
\begin{subequations} \label{eq:L-property}
\begin{align}
  &{\bf L}_{ii} > 0 \quad \forall i; \qquad {\bf L}_{ij} \leq 0 \quad
  \forall i,j \text{ and } i\neq j; \label{eq:L-property1}\\
  & {\bf L}_{ii} > \sum_{i\neq j}^N|{\bf L}_{ij}| \quad \forall i.
  \label{eq:L-property2}
\end{align}
\end{subequations}
\end{Lemma}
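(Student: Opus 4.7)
The plan is to write $\mathbf{L} = \mathbf{S} + \mathbf{T}$, corresponding to the splitting $(-\Delta)^s_h = \mathcal{L}^S_h + \mathcal{L}^T_h$ in \eqref{eq:Delta-hs}, and read off the entries of each summand by expanding $v_h(y) = \sum_{x_k \in \mathcal{N}_h} \phi_k(y) v_h(x_k)$ in the nodal $\mathbb{P}_1$ basis. Two structural facts will drive everything: the basis functions satisfy $\phi_k \geq 0$ with $\phi_k(x_i) = \delta_{ik}$, and they form a partition of unity on $\overline{\Omega}$ when both interior and boundary nodes are included (while $v_h$ itself vanishes at boundary and exterior nodes, so the matrix $\mathbf{L}$ is indexed only by $\mathcal{N}_h^0$).

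For \eqref{eq:L-property1}, I would treat the tail part first. Writing
\[
\mathcal{L}^T_h[v_h](x_i) = v_h(x_i)\int_{\Omega_i^c}\frac{\mathrm{d}y}{|x_i-y|^{n+2s}} - \int_{\Omega_i^c \cap \Omega}\frac{\sum_{k}\phi_k(y)v_h(x_k)}{|x_i-y|^{n+2s}}\,\mathrm{d}y,
\]
one reads $\mathbf{T}_{ij} = -\int_{\Omega_i^c \cap \Omega} \phi_j(y)|x_i-y|^{-n-2s}\,\mathrm{d}y \leq 0$ for $j\neq i$, and $\mathbf{T}_{ii} = \int_{\Omega_i^c}\bigl(1 - \phi_i(y)\mathbf{1}_{\Omega}(y)\bigr)|x_i-y|^{-n-2s}\,\mathrm{d}y \geq 0$ since $\phi_i \leq 1$ on $\Omega$ and vanishes off $\Omega$. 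For the finite-difference part, the stencil points $x_i \pm H_i e_j$ lie in $\overline{\Omega_i}\subset\Omega$, so expanding each in the nodal basis gives $\mathbf{S}_{ik} = -\frac{\kappa_{n,s,i}}{H_i^{2s}} \sum_{j=1}^{n}\bigl(\phi_k(x_i + H_i e_j) + \phi_k(x_i - H_i e_j)\bigr) \leq 0$ for $k\neq i$, while
\[
\mathbf{S}_{ii} = \frac{\kappa_{n,s,i}}{H_i^{2s}} \sum_{j=1}^{n}\Bigl[\bigl(1-\phi_i(x_i+H_i e_j)\bigr) + \bigl(1-\phi_i(x_i-H_i e_j)\bigr)\Bigr] > 0,
\]
since $\phi_i(y) < 1$ for any $y \neq x_i$. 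Adding, $\mathbf{L}_{ii} > 0$ and $\mathbf{L}_{ij} \leq 0$ for $j\neq i$, which is \eqref{eq:L-property1}.

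The cleanest way to obtain \eqref{eq:L-property2} is to apply $(-\Delta)^s_h$ to the discrete barrier $b_h$ of Lemma~\ref{lm:discrete-barrier}. Since $b_h(x_j) = 1$ on $\mathcal{N}_h^0$ and $b_h = 0$ elsewhere,
\[
\sum_{j=1}^{N} \mathbf{L}_{ij} = (-\Delta)^s_h[b_h](x_i) \geq C\,\delta_i^{-2s} > 0 \qquad \forall\, x_i \in \mathcal{N}_h^0.
\]
Combined with the sign pattern from step~(1), this yields $\mathbf{L}_{ii} > -\sum_{j\neq i}\mathbf{L}_{ij} = \sum_{j\neq i}|\mathbf{L}_{ij}|$, establishing \eqref{eq:L-property2}.

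The main subtlety is the bookkeeping at the boundary nodes: although $\mathbf{L}$ is only indexed by interior nodes, the partition-of-unity identity must be used over the full nodal set to get correct sign information, and the strict positivity of the row sum comes from the tail's ``leakage'' into $\Omega^c$ plus the contribution of the boundary basis functions inside $\Omega_i^c \cap \Omega$. Invoking Lemma~\ref{lm:discrete-barrier} packages exactly this leakage into a single quantitative lower bound, so the entire argument reduces to sign counting rather than a direct estimate of integrals and finite-difference weights.
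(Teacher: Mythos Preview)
Your proof is correct and follows essentially the same approach as the paper: you verify the sign pattern \eqref{eq:L-property1} directly from the definition of $(-\Delta)^s_h$ (the paper does this in one line, while you expand more carefully in the nodal basis), and you obtain \eqref{eq:L-property2} by applying the discrete barrier lemma exactly as the paper does, reading $\sum_j \mathbf{L}_{ij} = (-\Delta)^s_h[b_h](x_i) \geq C\delta_i^{-2s} > 0$ and combining with the sign pattern.
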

\begin{proof}
The property \eqref{eq:L-property1} follows directly from the
  definition of $(-\Delta)^s_h$. In fact, for the equation
  \eqref{eq:Delta-hs} at $x_i$, it is straightforward to observe that
  ${\bf L}_{ii} = 2n\kappa_{n,s,i} H_i^{-2s} + \int_{\Omega_i^c}
  |x_i-y|^{-(n+2s)}\mathrm{d}y$. Furthermore, all the other
  contributions to the off-diagonal terms are non-positive. 

Utilizing Lemma \ref{lm:discrete-barrier}, the discrete barrier function can be expressed as the vector $B\in \mathbb{R}^N$, where all entries are equal to one. Therefore, \eqref{eq:discrete-barrier} implies that 
$$
({\bf L} B)_i = {\bf L}_{ii} + \sum_{i\neq j}^N{\bf L}_{ij} = {\bf
  L}_{ii} - \sum_{i\neq j}^N|{\bf L}_{ij}|\geq C\delta_i^{-2s} > 0,
$$
which leads to \eqref{eq:L-property2}.
\end{proof}

Now, the enhanced discrete comparison principle applicable to the obstacle problem will be presented. This principle plays a crucial role in the subsequent analysis.

\begin{Lemma}[enhanced discrete comparison principle for
  $(-\Delta)^s_h$]\label{lm:enhanced-DCP}
Let $v_h, w_h \in \mathbb{V}_h$ be such that
    \begin{subequations} \label{eq:DCP}
    \begin{align}
        (-\Delta)^s_h [v_h](x_i) &\geq (-\Delta)^s_h[w_h](x_i)\quad
        \forall x_i \in \Lambda_h, \label{eq:DCP1}\\
        v_h(x_i)&\geq w_h(x_i) \quad \forall x_i \in
        \mathcal{N}_h^0\setminus \Lambda_h, \label{eq:DCP2}
    \end{align}
    \end{subequations}
where $\Lambda_h\subset \mathcal{N}_h^0$ is an arbitrary subset. Then,
  $v_h \geq w_h$ in $\Omega$.
\end{Lemma}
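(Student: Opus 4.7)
The approach is a contradiction argument that leverages the strong diagonal dominance of $\mathbf{L}$ established in Lemma~\ref{lm:L-property}. Set $e_h := v_h - w_h \in \mathbb{V}_h$ and suppose, toward contradiction, that $\min_{\overline{\Omega}} e_h < 0$. Since $e_h$ is continuous and piecewise linear on $\mathcal{T}_h$, this minimum is attained at some mesh node $x_{i^*}$. Boundary nodes carry $e_h = 0$, so $x_{i^*} \in \mathcal{N}_h^0$; hypothesis \eqref{eq:DCP2} gives $e_h \ge 0$ on $\mathcal{N}_h^0 \setminus \Lambda_h$, hence $x_{i^*} \in \Lambda_h$. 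Moreover, because $e_h \equiv 0$ on $\Omega^c$ and $e_h(x_{i^*}) < 0$, the value $e_h(x_{i^*})$ is also a global minimum of $e_h$ over all of $\mathbb{R}^n$.

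Applying \eqref{eq:DCP1} at $x_{i^*} \in \Lambda_h$, one has $(-\Delta)_h^s[e_h](x_{i^*}) \ge 0$. On the other hand, writing the operator in matrix form and using $\mathbf{L}_{i^*j} \le 0$ for $j \ne i^*$ together with $e_h(x_j) \ge e_h(x_{i^*})$ at every other interior node, we obtain
\[
(-\Delta)_h^s[e_h](x_{i^*}) = \sum_{j} \mathbf{L}_{i^*j}\, e_h(x_j) \;\le\; e_h(x_{i^*}) \Bigl( \mathbf{L}_{i^*i^*} - \sum_{j \ne i^*} |\mathbf{L}_{i^*j}|\Bigr).
\]
The strong diagonal dominance \eqref{eq:L-property2}, quantified in the proof of Lemma~\ref{lm:L-property} via Lemma~\ref{lm:discrete-barrier} as $\mathbf{L}_{i^*i^*} - \sum_{j \ne i^*} |\mathbf{L}_{i^*j}| \ge C\delta_{i^*}^{-2s} > 0$, together with $e_h(x_{i^*}) < 0$, then yields $(-\Delta)_h^s[e_h](x_{i^*}) < 0$, contradicting the previous line. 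Therefore $\min_{\overline{\Omega}} e_h \ge 0$, i.e. $v_h \ge w_h$ in $\Omega$.

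The main technical subtlety is to justify the matrix-form inequality above: the operator $(-\Delta)_h^s$ samples $e_h$ not only at interior mesh nodes, but also at the finite-difference stencil points $x_{i^*} \pm H_{i^*} e_j$ (through piecewise linear interpolation in $\mathcal{L}_h^S$) and at generic $y \in \Omega_{i^*}^c$ (in the tail integral $\mathcal{L}_h^T$). All such evaluations are convex combinations of interior nodal values of $e_h$ together with the prescribed value $0$ on $\Omega^c$, and are consequently bounded below by $e_h(x_{i^*})$. It is precisely this leakage of ``mass'' into $\Omega^c$, encoded in $\mathbb{V}_h$'s homogeneous Dirichlet condition, that upgrades diagonal dominance to \emph{strong} diagonal dominance and thereby drives the strict inequality needed to close the contradiction.
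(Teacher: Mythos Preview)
Your proof is correct, but it follows a different route from the paper. The paper argues by block matrix decomposition: ordering the nodes so that $\Lambda_h$ comes first, the hypotheses read $\mathbf{L}_{11}Z^1 + \mathbf{L}_{12}Z^2 \ge 0$ and $Z^2 \ge 0$; since $\mathbf{L}_{12}$ has nonpositive entries one gets $\mathbf{L}_{11}Z^1 \ge 0$, and then the fact that the principal submatrix $\mathbf{L}_{11}$ inherits the strictly diagonally dominant $M$-matrix property (hence $(\mathbf{L}_{11}^{-1})_{ij}\ge 0$) yields $Z^1 \ge 0$. Your argument instead runs a discrete minimum principle by contradiction, evaluating the row of $\mathbf{L}$ at the minimizing node and exploiting the strict row-sum positivity \eqref{eq:L-property2} directly. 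Your approach is slightly more elementary in that it never needs the inverse-positivity of $\mathbf{L}_{11}$, only the sign pattern and the strict dominance of a single row; the paper's block argument, on the other hand, is more algebraic and makes the role of the subset $\Lambda_h$ structurally transparent. Your closing paragraph on the ``technical subtlety'' is a fair remark but is already absorbed into the definition of $\mathbf{L}$: since $e_h\in\mathbb{V}_h$, every off-nodal evaluation in $\mathcal{L}_h^S$ and $\mathcal{L}_h^T$ is a convex combination of interior nodal values and zero, so the identity $(-\Delta)_h^s[e_h](x_{i^*}) = \sum_j \mathbf{L}_{i^*j}\,e_h(x_j)$ holds exactly.
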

\begin{proof}
Since $v_h, w_h$ are piecewise linear functions, it suffices to prove
$v_h(x_i)\geq w_h(x_i)$ for all $x_i\in\mathcal{N}_h^0$.  Let $z_h :=
v_h - w_h$, and $Z \in \mathbb{R}^N$ denotes its coefficient under the
basis function.  Our objective is to demonstrate that $Z \geq 0$.
Note that the resulting discrete matrix of $(-\Delta)_h^s$, denoted as
${\bf L}$, exhibits strong diagonal dominance. Then, \eqref{eq:DCP}
has the matrix representation
    \[
    {\bf L} Z = 
    \left(
    \begin{matrix}
        {\bf L}_{11} & {\bf L}_{12}\\
        0 & {\bf I} \\
    \end{matrix} 
    \right)
    \left(
    \begin{matrix}
        Z^1\\
        Z^2
    \end{matrix}
    \right)
    \geq 0.
    \]
Here, the indices of all interior points in $\Lambda_h$ are combined into the first group. The remaining indices form the second group. This implies that ${\bf L}{11}Z^{1} + {\bf L}{12} Z^{2} \geq 0$ and $Z^2\geq 0$.
  
By above lemma (strongly diagonal dominant $M$-matrix), the entries in
  the off-diagonal block ${\bf L}_{12}$ are non-positive, which yields
\[
    {\bf L}_{11} Z^{1} \geq -{\bf L}_{12} Z^2 \geq 0.
\]
Since ${\bf L}$ is a strongly diagonal dominant $M$-matrix, so do
${\bf L}_{11}$, that is $({\bf L}_{11}^{-1})_{ij}\geq 0$. Therefore, it can be deduced that $Z^1 \geq 0$ and $Z\geq 0$, which completes the proof.
\end{proof}

\subsection{Numerical scheme} \label{subsec:scheme}
Now, the numerical scheme for solving the fractional obstacle problem is proposed: Find $u_h\in \mathbb{V}_h$, such that
\begin{equation}\label{eq:Gh}
	\begin{aligned}
    \mathcal{G}_h[u_h](x_i)  := \min \big\{(-\Delta)^s_h[u_h](x_i)
    -f(x_i), u_h(x_i)-\psi(x_i) \big\} = 0\quad \forall x_i \in
    \mathcal{N}_h^0.
	\end{aligned}
\end{equation}
It is noted that, although not explicitly stated, there is a parameter $\alpha$ in the discretization of the integral fractional Laplacian that is influenced by the regularity of the solution $u$. The discrete comparison principle for $\mathcal{G}_h$ will be demonstrated, from which the uniqueness of \eqref{eq:Gh} follows directly.

\begin{Lemma}[discrete comparison principle for $\mathcal{G}_h$]\label{lm:DCP-Gh}
Let $v_h, w_h \in \mathbb{V}_h$ be such that
\[
\mathcal{G}_h [v_h] (x_i) \geq \mathcal{G}_h [w_h](x_i)\quad \forall
  x_i \in \mathcal{N}_h^0.
\] 
Then, $v_h \geq w_h$ in $\Omega$.
\end{Lemma}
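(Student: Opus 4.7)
The plan is to reduce the comparison principle for the nonlinear operator $\mathcal{G}_h$ to the enhanced discrete comparison principle for the linear operator $(-\Delta)^s_h$ (Lemma \ref{lm:enhanced-DCP}), by choosing the subset $\Lambda_h \subset \mathcal{N}_h^0$ according to which branch of the $\min$ is active for $w_h$.

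Concretely, first I would partition the interior nodes based on $w_h$: set
\[
\Lambda_h := \bigl\{ x_i \in \mathcal{N}_h^0 \ : \ \mathcal{G}_h[w_h](x_i) = (-\Delta)^s_h[w_h](x_i) - f(x_i) \bigr\},
\]
so that on the complement $\mathcal{N}_h^0 \setminus \Lambda_h$ the obstacle branch is the strictly smaller one, i.e.\ $\mathcal{G}_h[w_h](x_i) = w_h(x_i) - \psi(x_i)$. Then I would verify each of the two hypotheses of Lemma \ref{lm:enhanced-DCP} by using the obvious inequality $\mathcal{G}_h[v_h](x_i) \leq \min\text{-argument for any branch of } v_h$ together with the assumption $\mathcal{G}_h[v_h] \geq \mathcal{G}_h[w_h]$.

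For $x_i \in \Lambda_h$, combine $(-\Delta)^s_h[v_h](x_i) - f(x_i) \geq \mathcal{G}_h[v_h](x_i) \geq \mathcal{G}_h[w_h](x_i) = (-\Delta)^s_h[w_h](x_i) - f(x_i)$ to obtain \eqref{eq:DCP1}. For $x_i \in \mathcal{N}_h^0 \setminus \Lambda_h$, use $v_h(x_i) - \psi(x_i) \geq \mathcal{G}_h[v_h](x_i) \geq \mathcal{G}_h[w_h](x_i) = w_h(x_i) - \psi(x_i)$, which yields \eqref{eq:DCP2} after canceling $\psi(x_i)$. With both conditions in hand, Lemma \ref{lm:enhanced-DCP} immediately gives $v_h \geq w_h$ on $\Omega$.

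The only subtlety is aligning the two sides of the $\min$: one must always dominate $\mathcal{G}_h[v_h]$ from above by the branch whose counterpart is active for $w_h$, and then chase the inequality through. There is no real obstacle here once one recognizes that the enhanced comparison principle of Lemma \ref{lm:enhanced-DCP} was designed precisely to accommodate an arbitrary contact set $\Lambda_h$; the whole statement is essentially a one-line corollary of that lemma combined with the definition of $\mathcal{G}_h$.
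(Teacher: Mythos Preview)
Your proposal is correct and follows essentially the same argument as the paper: partition the interior nodes according to which branch of the $\min$ is active for $w_h$, bound $\mathcal{G}_h[v_h]$ from above by the corresponding branch, and invoke Lemma~\ref{lm:enhanced-DCP}. The only cosmetic difference is that the paper defines its set $\Lambda_h^w$ via the \emph{obstacle} branch being active (the complement of your $\Lambda_h$), so the roles of the two cases are swapped; your labeling in fact matches the hypotheses \eqref{eq:DCP1}--\eqref{eq:DCP2} of Lemma~\ref{lm:enhanced-DCP} more directly.
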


\begin{proof}
Since $v_h ,w_h\in \mathbb{V}_h$, it suffices to prove $v_h(x_i) \geq
  w_h(x_i)$ for all $x_i \in \mathcal{N}_h^0$. The
  interior points will be classified into two sets based on the values of
  $\mathcal{G}_h[w_h]$:
$$
\Lambda_h^w := \{x_i \in \mathcal{N}_h^0 :  (-\Delta)_h^s [w_h] (x_i)
  - f(x_i) \geq w_h(x_i) - \phi(x_i)\}.
$$ 

For any $x_i \in { \Lambda_h^w}$, the following holds:
$$ 
v_h(x_i) - \psi(x_i) \geq \mathcal{G}_h[v_h](x_i) \geq
  \mathcal{G}_h[w_h](x_i) = w_h(x_i) - \psi(x_i),
$$
Furthermore, for any $x_i \in {\mathcal{N}_h^0 \setminus \Lambda_h^w}$, the following inequalities are true: 
$$
(-\Delta)^s_h[v_h](x_i) - f(x_i) \geq \mathcal{G}_h[v_h](x_i) \geq
  \mathcal{G}_h[w_h](x_i) = (-\Delta)^s_h[w_h](x_i) - f(x_i).
$$ 
This leads to the desired result by taking $\Lambda_h = \Lambda_h^w$
  in Lemma \ref{lm:enhanced-DCP} (enhanced discrete comparison
  principle for $(-\Delta)^s_h$).
\end{proof}

\begin{Corollary}[uniqueness] \label{co:uniqueness}
    For the problem \eqref{eq:Gh}, the discrete solution is unique.
\end{Corollary}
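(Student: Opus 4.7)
The plan is to derive uniqueness as an immediate consequence of the discrete comparison principle just established in Lemma \ref{lm:DCP-Gh}. There is no substantive work to do beyond invoking that lemma symmetrically on two candidate solutions.

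Concretely, I would start by supposing that $u_h^1, u_h^2 \in \mathbb{V}_h$ are two solutions of \eqref{eq:Gh}. By definition of being a solution, $\mathcal{G}_h[u_h^1](x_i) = 0 = \mathcal{G}_h[u_h^2](x_i)$ for every $x_i \in \mathcal{N}_h^0$. In particular, the inequality $\mathcal{G}_h[u_h^1](x_i) \geq \mathcal{G}_h[u_h^2](x_i)$ holds on all interior nodes, so Lemma \ref{lm:DCP-Gh} yields $u_h^1 \geq u_h^2$ in $\Omega$. Swapping the roles of $u_h^1$ and $u_h^2$ and applying the lemma again gives the reverse inequality, hence $u_h^1 = u_h^2$.

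Since both candidate solutions are piecewise linear on $\mathcal{T}_h$ and vanish outside $\Omega$, equality at all interior nodes extends to equality on all of $\mathbb{R}^n$, which concludes the argument. There is no real obstacle here: the entire content of the corollary has been absorbed into the enhanced discrete comparison principle (Lemma \ref{lm:enhanced-DCP}) via the strong diagonal dominance of $\mathbf{L}$ (Lemma \ref{lm:L-property}), and the corollary is just the standard symmetric application of comparison that upgrades \textit{monotone dependence} to \textit{uniqueness}.
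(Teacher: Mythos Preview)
Your proof is correct and matches the paper's intended approach: the paper states the corollary immediately after Lemma \ref{lm:DCP-Gh} without a separate proof, precisely because uniqueness follows from the symmetric application of the comparison principle exactly as you describe.
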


\subsection{Stability and existence}

The existence and stability of \eqref{eq:Gh} will be shown in this section. 

\begin{theorem}[existence and stability] \label{tm:existence} 
  There exists a unique $u_h \in \mathbb{V}_h$ that solves
  \eqref{eq:Gh}. The solution $u_h$ is stable in the sense that
  $\|u_h\|_{L^\infty(\Omega)} \leq C$, where the constant $C$ is
  independent of $h$.
\end{theorem}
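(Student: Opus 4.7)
The plan is to handle the two claims separately: stability follows from the comparison principle of Lemma \ref{lm:DCP-Gh} applied to an explicit barrier-based supersolution, and existence is produced by a discrete Perron argument on the set of supersolutions. Uniqueness is already in Corollary \ref{co:uniqueness}, so I focus on these pieces.

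I would dispose of stability first. Since $\delta_i \le \mathrm{diam}(\Omega)$, Lemma \ref{lm:discrete-barrier} gives $(-\Delta)^s_h b_h(x_i) \ge C_0 = C_0(s,\Omega) > 0$ uniformly in $h$ over all interior nodes. Choosing $C_+$ to be a suitable multiple of $\|f\|_{L^\infty(\Omega)} + \|\psi^+\|_{L^\infty(\Omega)}$, the function $v_h^+ := C_+ b_h$ then satisfies both $(-\Delta)^s_h v_h^+ \ge f$ and $v_h^+ \ge \psi$ at every interior node, i.e.\ $\mathcal{G}_h[v_h^+] \ge 0$. For any solution $u_h$ of \eqref{eq:Gh}, the condition $\mathcal{G}_h[u_h] = 0$ forces $u_h \ge \psi$ at nodes, giving the lower bound $u_h \ge -\|\psi\|_{L^\infty(\Omega)}$ directly, while Lemma \ref{lm:DCP-Gh} applied to $v_h = v_h^+$, $w_h = u_h$ yields $u_h \le v_h^+ \le C_+$. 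This produces the $h$-independent $L^\infty$ bound in the statement.

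For existence I would invoke a discrete Perron-type argument on the supersolution set
\[
\mathcal{S} := \{v_h \in \mathbb{V}_h \;:\; v_h(x_i) \ge \psi(x_i)\text{ and } (-\Delta)^s_h v_h(x_i) \ge f(x_i) \text{ for all } x_i \in \mathcal{N}_h^0\},
\]
which is nonempty by the previous paragraph (it contains $v_h^+$) and closed and convex in the nodal-value space. The key observation is that $\mathcal{S}$ is stable under pointwise minima: if $v_h, w_h \in \mathcal{S}$ and $v_h(x_i) \le w_h(x_i)$, then $\min(v_h,w_h) - v_h \le 0$ attains its value $0$ (a non-negative maximum) at $x_i$, so Lemma \ref{lm:monotone-Delta-h} gives $(-\Delta)^s_h \min(v_h,w_h)(x_i) \ge (-\Delta)^s_h v_h(x_i) \ge f(x_i)$, while the obstacle inequality $\min(v_h,w_h) \ge \psi$ is automatic. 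A standard diagonal argument over the finitely many interior nodes then produces a single $u_h^* \in \mathcal{S}$ realizing the nodal infimum $u_h^*(x_i) = \inf_{v_h \in \mathcal{S}} v_h(x_i)$; in particular $\mathcal{G}_h[u_h^*] \ge 0$.

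The concluding step is to promote $u_h^*$ to an equation by ruling out $\mathcal{G}_h[u_h^*](x_{i_0}) > 0$ at any interior node $x_{i_0}$. Assume this for contradiction; then both $u_h^* - \psi$ and $(-\Delta)^s_h u_h^* - f$ are strictly positive at $x_{i_0}$. Set $\tilde u_h := u_h^* - \varepsilon\, \phi_{i_0}$, where $\phi_{i_0} \in \mathbb{V}_h$ is the nodal basis function at $x_{i_0}$. The sign structure of $\mathbf{L}$ from Lemma \ref{lm:L-property} (positive diagonal, non-positive off-diagonal) gives $(-\Delta)^s_h \tilde u_h(x_{i_0}) = (-\Delta)^s_h u_h^*(x_{i_0}) - \varepsilon \mathbf{L}_{i_0 i_0}$ and $(-\Delta)^s_h \tilde u_h(x_j) \ge (-\Delta)^s_h u_h^*(x_j)$ for every $j \ne i_0$, while the obstacle inequality only tightens at $x_{i_0}$ and is untouched elsewhere; for $\varepsilon > 0$ small enough the two strict inequalities at $x_{i_0}$ survive, so $\tilde u_h \in \mathcal{S}$ with $\tilde u_h(x_{i_0}) < u_h^*(x_{i_0})$, contradicting minimality. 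I expect the main subtlety to lie precisely here, namely in propagating both inequalities defining $\mathcal{S}$ at all other nodes under the downward perturbation --- and this is exactly where the strengthened $M$-matrix structure of Lemma \ref{lm:L-property} (non-positive off-diagonals of $\mathbf{L}$) is indispensable, rather than the bare monotonicity used earlier.
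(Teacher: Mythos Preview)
Your argument is correct and follows the same Perron strategy as the paper: both start from the barrier supersolution $Eb_h$ and rely on the $M$-matrix sign structure of $\mathbf{L}$ (Lemma~\ref{lm:L-property}) to lower nodal values without destroying the supersolution property elsewhere --- the paper does this via an explicit iterative node-by-node sweep producing a monotone decreasing sequence $\{u_h^k\}$ (so your perturbation $u_h^* - \varepsilon\phi_{i_0}$ is exactly one micro-step of their construction), whereas you take the nodal infimum over all supersolutions directly and invoke the same perturbation once, in the final contradiction. Your separation of the stability bound via Lemma~\ref{lm:DCP-Gh} is slightly cleaner, applying to any solution rather than being read off the construction; the paper instead reads $u_h \le u_h^0 = Eb_h$ from the monotone sequence. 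One small caveat: by ``pointwise minima'' you must mean the $\mathbb{V}_h$-interpolant of the nodal minima (the true pointwise minimum of two piecewise-linear functions generally leaves $\mathbb{V}_h$), but your application of Lemma~\ref{lm:monotone-Delta-h} is correct for precisely that object, so no harm done.
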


\begin{proof} To establish the existence, a monotone sequence of discrete functions $\{\{u_h^k\}_{k=0}^\infty\}$ is constructed, starting with the initial guess $u_h^0 \in \mathbb{V}_h$ that satisfies the following condition: 
\begin{equation} \label{eq:uh0}
\mathcal{G}_h[u_h^0](x_i) \geq 0 \quad \forall x_i \in
  \mathcal{N}_h^0.
\end{equation} 

\noindent {\it Step 1 (existence of $u_h^0$)}. Let $u_h^0 :=
  E b_h \in \mathbb{V}_h$ where the discrete barrier function $b_h$ is
  defined in \eqref{eq:discrete-barrier}, and the constant $E > 0$
  will be specified later. By the Lemma \ref{lm:discrete-barrier}
  (discrete barrier function), the following relation holds:
\[
 \begin{aligned}
        (-\Delta)^s_h[Eb_h](x_i)-f(x_i)&\geq CE
        \delta_i^{-2s}-f(x_i)\geq CE \mathrm{diam}(\Omega)^{-2s} -
        \|f\|_{L^\infty(\Omega)}\\
        Eb_h(x_i) -\psi(x_i) &\geq  E - \|\psi\|_{L^\infty(\Omega)}.
    \end{aligned}
\]
Setting $E = \max\{\|\psi\|_{L^\infty(\Omega)}, C^{-1}
  \mathrm{diam}(\Omega)^{2s}\|f\|_{L^\infty(\Omega)}\}$, then
  \eqref{eq:uh0} is ensured.

\noindent {\it Step 2 (Perron construction)}. Induction is employed.
Suppose that there already exists a discrete function $u_h^k \in
\mathbb{V}_h$ that satisfies
\begin{equation} \label{eq:uhk}
\mathcal{G}_h[u_h^k](x_i) \geq 0 \quad \forall x_i \in
  \mathcal{N}_h^0.
\end{equation} 
The construction of $u_h^{k+1} \in \mathbb{V}_h$ with the properties $u_h^{k+1}
\leq u_h^k$ and satisfaction of \eqref{eq:uhk} is as follows: All interior nodes are considered sequentially, and auxiliary functions $u_h^{k,i-1} \in \mathbb{V}_h$ are constructed using the first $i-1$ nodes, starting with $u_h^{k,0}
:= u_h^k$.
At $x_i \in \mathcal{N}_h^0$, it is checked
whether or not $\mathcal{G}_hu_h^{k,i-1} > 0$. If this is the case, the value of $u_h^{k,i-1}(x_i)$ is decreased, resulting in the function $u_h^{k,i}$, until
$$ 
\mathcal{G}_h[u_h^{k,i}](x_i) = 0.
$$ 
This is achievable since the discrete matrix of $(-\Delta)_h^s$
  satisfies ${\bf L}_{ii} > 0$, as stated in \eqref{eq:L-property1}.
  Moreover, at other nodes $x_j \neq x_i$, ${\bf L}_{ij} \leq 0$
  implies that 
$$
\mathcal{G}_h[u_h^{k,i}](x_j) \geq  \mathcal{G}_h[u_h^{k,i-1}](x_j)
  \geq 0 \quad \forall x_j \neq x_i.
$$
This process is repeated with the remaining nodes $x_j$ for $i < j \leq
N$, and $u_h^{k+1} := u_h^{k,N}$ is set as the last intermediate function. By construction, the following is obtained:
$$
\mathcal{G}_h[u_h^{k+1}](x_i) \geq 0, \quad u_h^{k+1}(x_i) \geq
  u_h^k(x_i) \quad \forall x_i \in \mathcal{N}_h^0.
$$

\noindent {\it Step 3 (convegence)}. The sequence
$\{u_h^k(x_i)\}_{k=1}^\infty$ is monotonically decreasing and clearly
clearly bounded from below by $\psi(x_i)$. Hence, the sequence
converges, and the limit 
$$ 
u_h(x_i) := \lim_{k \to \infty} u_h^k(x_i) \quad \forall x_i \in \mathcal{N}_h^0
$$ 
defines $u_h \in \mathbb{V}_h$. It also satisfies the desired equality 
$$ 
\mathcal{G}_h[u_h](x_i) = 0 \quad \forall x_i \in \mathcal{N}_h^0,
$$ 
since $\mathcal{G}_h[u_h](x_i)  = \lim_{k\to \infty}
\mathcal{G}_h[u_h](x_i) \geq 0$ and if the last inequality were
strict, Step 2 could be applied to further improve $u_h$. This
demonstrates the existence of a discrete solution to \eqref{eq:Gh}.
The above proof also implies
$$ 
\|u_h\|_{L^\infty(\Omega)} \leq \max\{\|\psi\|_{L^\infty(\Omega)},
C^{-1} \mathrm{diam}(\Omega)^{2s}\|f\|_{L^\infty(\Omega)}\},
$$ 
which is the uniform bound, as asserted. 
\end{proof}

\begin{Remark}[discussion on the consistency and convergence]
An important concept in numerical methods is its consistency, which involves investigating the property of 
  $|\mathcal{G}[v](x) - \mathcal{G}_h[\mathcal{I}_h v](x^h_i)|$
  tending to zero as the mesh size decreases. In fact, for the
  integral fractional Laplacian operator, a detailed analysis in
  \cite{han2022monotone} reveals that the discretization achieves
  consistency at interior points that are a constant number of mesh
  sizes away from the boundary. The inconsistency near boundary points
  can be addressed by incorporating a discrete barrier function
  \eqref{eq:discrete-barrier}. This analysis technique is commonly
  used in the convergence analysis of semi-Lagrangian or two-scale method
  \cite{feng2017convergent,nochetto2019two} and can be viewed as an extension to the
  Barles-Souganidis \cite{barles1991convergence} analysis framework
  for nonlinear problems. Due to space limitations, elaboration on the corresponding results will not be provided here.
\end{Remark}

\section{Solver} \label{sec:solver}

In this section, the {\it policy iteration} (Howard's algorithm) is employed to solve the discrete fractional obstacle problem \eqref{eq:Gh}. It is worth noting that policy iteration is a well-established and extensively studied technique in dynamic control
problems. 
For the problem \eqref{eq:Gh}, the convergence of the policy iteration is established by leveraging the monotonicity of the discrete operator. 
Furthermore, an improved policy iteration will be proposed by incorporating prior knowledge, such as the regularity of the solution to the fractional obstacle problem discussed in Section \ref{sec:prelim}.

\subsection{Policy iteration} \label{subsec:policy}

Policy iteration is utilized to solve the discrete problem \eqref{eq:Gh}. The algorithm is outlined as follows.

\begin{algorithm}[htbp!] 
    \caption{Policy iteration for the fractional obstacle problem \eqref{eq:Gh}} \label{al:policy}
    \KwIn{
    right hand side $f$, obstacle function $\psi$.}
    \KwOut{solution $u_h$.}
    \BlankLine
    Initialize $u_h^{(0)}(x_i) = \psi(x_i) \quad \forall x_i \in \mathcal{N}_h^0$;\\
    Iterate for $k \geq 0$:
    \begin{enumerate}
        \item Update discrete contact set $\mathcal{C}^{(k+1)}_h$:\\
        $\mathcal{C}^{(k+1)}_h:=\{x_i: (-\Delta)_h^{s}[u_h^{(k)}](x_i)
        - f(x_i) \geq u_h^{(k)}(x_i) -\psi(x_i) \}$.
        \item Update solution $u_h^{(k+1)}$ such that 
        \begin{equation} \label{eq:policy-update}
        \begin{aligned}
            u_h^{(k+1)}(x_i) -\psi(x_i) &= 0\quad \forall x_i \in
            \mathcal{C}^{(k+1)}_h, \\
            (-\Delta)^{s}_h[u_h^{(k+1)}](x_i) -f(x_i) &=0 \quad
            \forall x_i \in \mathcal{N}_h^0\setminus
            \mathcal{C}^{(k+1)}_h.
        \end{aligned}
        \end{equation}
        \item If $u_h^{(k+1)}=u_h^{(k)}$, then stop; Otherwise,
          continue to Step 1.
    \end{enumerate}
\end{algorithm}

There exists a convergence result for policy iteration when solving
the nonlinear problem \eqref{eq:Gh}, which was initially proven in
\cite{bokanowski2009some}. Here, a brief overview of the proof using our notation is provided for clarity. It is worth noting that the
monotonicity property plays a crucial role in the convergence analysis.

\begin{theorem}[convergence of policy iteration] \label{tm:policy-convergence}
  The sequence $\{u_h^{(k)}\}_{k=0}^\infty$ given by above algorithm
  satisfies:
	\begin{enumerate}
    \item $u_h^{(k)}(x_i) \leq u_h^{(k+1)}(x_i)$ for all $k \geq 0$
      and $x_i \in \mathcal{N}_h^0$;
		\item The algorithm converges in at most $N$ iterations.
	\end{enumerate}
\end{theorem}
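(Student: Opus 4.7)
The plan is to establish (1) monotonicity by induction on $k$ via Lemma \ref{lm:enhanced-DCP}, and then combine (1) with a strict-growth argument on the auxiliary set
\[
S^{(k)} := \{x_i \in \mathcal{N}_h^0 : u_h^{(k)}(x_i) > \psi(x_i)\}
\]
to establish (2).

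For (1), I would apply Lemma \ref{lm:enhanced-DCP} with $v_h = u_h^{(k+1)}$, $w_h = u_h^{(k)}$, and $\Lambda_h = \mathcal{N}_h^0 \setminus \mathcal{C}_h^{(k+1)}$. On $\Lambda_h$, the update rule \eqref{eq:policy-update} gives $(-\Delta)_h^s u_h^{(k+1)} = f$; the inequality $(-\Delta)_h^s u_h^{(k)} \leq f$ there is verified by splitting on whether $x_i \in \mathcal{C}_h^{(k)}$: if yes, then $u_h^{(k)}(x_i) = \psi(x_i)$ and the strict inequality defining the complement of $\mathcal{C}_h^{(k+1)}$ forces $(-\Delta)_h^s u_h^{(k)}(x_i) < f(x_i)$; if no, then $(-\Delta)_h^s u_h^{(k)}(x_i) = f(x_i)$ directly by the update rule at step $k$. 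On $\mathcal{N}_h^0 \setminus \Lambda_h = \mathcal{C}_h^{(k+1)}$ one has $u_h^{(k+1)} = \psi$, and the symmetric split shows $u_h^{(k)} \leq \psi$ there: if $x_i \in \mathcal{C}_h^{(k)}$ then $u_h^{(k)} = \psi$ trivially; otherwise $(-\Delta)_h^s u_h^{(k)}(x_i) = f(x_i)$, and the defining inequality of $\mathcal{C}_h^{(k+1)}$ yields $u_h^{(k)}(x_i) \leq \psi(x_i)$. The base case $k = 0$ is identical with $u_h^{(0)} \equiv \psi$ at every interior node.

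For (2), monotonicity gives $S^{(k)} \subseteq S^{(k+1)}$ immediately, and iterating yields $u_h^{(k)} \geq u_h^{(0)} = \psi$ at interior nodes. The heart of the argument is: if $u_h^{(k+1)} \neq u_h^{(k)}$, then $S^{(k+1)} \supsetneq S^{(k)}$. Arguing by contradiction, suppose $S^{(k+1)} = S^{(k)}$. Then on $\mathcal{N}_h^0 \setminus S^{(k)}$ both iterates equal $\psi$, so $z_h := u_h^{(k+1)} - u_h^{(k)}$ vanishes there. Since $u_h^{(k)} = \psi$ on $\mathcal{C}_h^{(k)}$ and $\psi \leq u_h^{(k)} \leq u_h^{(k+1)} = \psi$ on $\mathcal{C}_h^{(k+1)}$, one obtains $\mathcal{C}_h^{(k)} \cup \mathcal{C}_h^{(k+1)} \subseteq \mathcal{N}_h^0 \setminus S^{(k)}$. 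Consequently $(-\Delta)_h^s u_h^{(k)} = f = (-\Delta)_h^s u_h^{(k+1)}$ on all of $S^{(k)}$, giving $(-\Delta)_h^s z_h \equiv 0$ there. Applying Lemma \ref{lm:enhanced-DCP} to the pairs $(z_h, 0)$ and $(0, z_h)$ with $\Lambda_h = S^{(k)}$ forces $z_h \equiv 0$, contradicting $u_h^{(k+1)} \neq u_h^{(k)}$. Since $|S^{(0)}| = 0$, $|S^{(k)}|$ strictly increases at each non-terminating step, and $|S^{(k)}| \leq N$, the algorithm terminates in at most $N$ iterations.

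The main obstacle I anticipate is keeping the case analysis in (1) clean—tracking how the strict versus non-strict defining inequality of $\mathcal{C}_h^{(k+1)}$ interacts with membership in $\mathcal{C}_h^{(k)}$—and, for Part 2, correctly deducing $\mathcal{C}_h^{(k)} \cup \mathcal{C}_h^{(k+1)} \subseteq \mathcal{N}_h^0 \setminus S^{(k)}$, which is what ensures the discrete fractional Laplacian evaluates identically on both iterates throughout $S^{(k)}$ and lets Lemma \ref{lm:enhanced-DCP} close the contradiction.
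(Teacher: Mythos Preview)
Your argument is correct. Part~(1) is essentially the paper's proof: both apply Lemma~\ref{lm:enhanced-DCP} with $\Lambda_h = \mathcal{N}_h^0 \setminus \mathcal{C}_h^{(k+1)}$, though the paper compresses your case split on $\mathcal{C}_h^{(k)}$ into the single observation $\mathcal{G}_h[u_h^{(k)}] \leq 0$ (one term in the min vanishes by the update rule), after which the definition of $\mathcal{C}_h^{(k+1)}$ directly identifies which term realizes the min and is therefore $\leq 0$. For Part~(2) you take a dual route. The paper shows the contact sets $\mathcal{C}_h^{(k)}$ are \emph{decreasing}: if $x_j \notin \mathcal{C}_h^{(k)}$ then $(-\Delta)_h^s u_h^{(k)}(x_j) - f(x_j) = 0 \leq u_h^{(k)}(x_j) - \psi(x_j)$ places $x_j$ outside $\mathcal{C}_h^{(k+1)}$, and equality $\mathcal{C}_h^{(k+1)} = \mathcal{C}_h^{(k)}$ then forces $u_h^{(k+1)} = u_h^{(k)}$ trivially from~\eqref{eq:policy-update}, with no second use of Lemma~\ref{lm:enhanced-DCP}. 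Your increasing sets $S^{(k)}$ require that second invocation to close the contradiction, making the argument a bit longer, but the strict inequality defining $S^{(k)}$ cleanly sidesteps the tie-breaking edge case inherent in the $\geq$ that defines $\mathcal{C}_h^{(k+1)}$.
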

\begin{proof}
With $u_h^{(0)}(x_i) = \psi(x_i)$ for all $x_i \in \mathcal{N}_h^0$,
$\mathcal{C}_h^{(0)}$ can be defined as $\mathcal{N}_h^0$ to ensure
that \eqref{eq:policy-update} holds for the initialization.

\noindent{\it Step 1 (increasing sequence)}. \eqref{eq:policy-update} implies that for $k\geq 0$
$$ 
\mathcal{G}_h[u_h^{(k)}](x_i) = \min\{ (-\Delta)_h^{s}[u_h^{(k)}](x_i)
  - f(x_i), u_h^{(k)}(x_i) -\psi(x_i)\} \leq 0 \quad \forall x_i \in
  \mathcal{N}_h^0.
$$ 
Then, the definition of $\mathcal{C}_h^{(k+1)}$ in Step 1 implies 
    \[
    \begin{aligned}
        u_h^{(k)}(x_i) - \psi(x_i)&\leq 0\quad \forall x_i \in
        \mathcal{C}^{(k+1)}_h, \\
        (-\Delta)^{s}_h [u_h^{(k)}](x_i) - f(x_i)&\leq 0\quad \forall
        x_i \in \mathcal{N}_h^0\setminus \mathcal{C}^{(k+1)}_h.
    \end{aligned}
    \]

    Next, using the update condition \eqref{eq:policy-update}, the following relation holds:
    \[
    \begin{aligned}
        u_h^{(k)}(x_i) - \psi(x_i)&\leq  u_h^{(k+1)}(x_i) -
        \psi(x_i)\qquad \quad \forall x_i \in \mathcal{C}^{(k+1)}_h,
        \\
        (-\Delta)^{s}_h [u_h^{(k)}](x_i) - f(x_i)&\leq
        (-\Delta)^{s}_h [u_h^{(k)}](x_i) - f(x_i) \quad \forall x_i
        \in \mathcal{N}_h^0\setminus \mathcal{C}^{(k+1)}_h.
    \end{aligned}
    \]
By combining Lemma \ref{lm:enhanced-DCP} (enhanced discrete
comparison principle for $(-\Delta)_h^s$), the increasing sequence is shown as
    \[
    u_h^{(k)}(x_i)\leq u_h^{(k+1)}(x_i)\quad \forall x_i \in
    \mathcal{N}_h^0.
    \]
    
\noindent{\it Step 2 (strictly decreasing discrete contact set)}. First, the increasing sequence $\{u_h^{(k)}\}_{k=0}^\infty$
  implies  
  \[
      u_h^{(k)}(x_i) - \psi(x_i) \geq 0,\quad \forall x_i \in
      \mathcal{N}_h, \;\forall k\geq 0.
  \]

  Now, for any $x_j\in \mathcal{N}_h^0 \setminus \mathcal{C}^{(k)}_h$,
  the update condition \eqref{eq:policy-update} and the above property
  implies 
  $$
        (-\Delta)_h^{s}[u_h^{(k)}](x_j) - f(x_j) = 0 \leq u_h^{(k)}(x_j)
        - \psi(x_j),
  $$
  whence $x_j \in \mathcal{N}_h^0 \setminus \mathcal{C}^{(k+1)}_h$ for
  the definition of $\mathcal{C}^{(k+1)}_h$ in Step 1. This means 
  $$
  \mathcal{N}_h^0 \setminus \mathcal{C}^{(k)}_h \subset
  \mathcal{N}_h^0 \setminus C^{(k+1)}_h, \quad \text{or} \quad
  \mathcal{C}^{(k+1)}_h \subset \mathcal{C}^{(k)}_h \quad \forall k
  \geq 0.
  $$
  Further, if $\mathcal{C}^{(k+1)}_h = \mathcal{C}^{(k)}_h$, then the
  iteration clearly stops at $(k+1)$-th step, due to the update
  condition \eqref{eq:policy-update}. That is, the discrete contact
  set is strictly decreasing unless the iteration stops, which makes
  at most $N$ iterations. 
\end{proof}

\begin{Remark}[convergence profile of Algorithm \ref{al:policy}]
The $N$-step iteration is proven to be sharp in the general case, as
  demonstrated by a specific example presented in
  \cite{santos2004convergence}. However, in numerical experiments,
  convergence is often observed within just a few iterations.
\end{Remark}

\subsection{An improved policy iteration}
Next, improvements will be introduced to the discrete operator in the policy iteration algorithm. It is observed that the scaling $H_i$ of the singular integral part of $(-\Delta)_h^s$ over the region $\Omega_i$
can be adjusted, as shown in expression \eqref{eq:Hi}. In the policy iteration, modifications are made to the selection of $H_i$, primarily for the following reasons: 

\begin{enumerate}
\item During the update of $u_h$ in Step 3, if the discrete contact
  set $\mathcal{C}_h$ has been updated, equation
    \eqref{eq:policy-update} can be seen as the discrete fractional
    Laplacian on the non-contact points $\mathcal{N}_h^0\setminus
    \mathcal{C}_h$. Instead, the values at the contact points
    $\mathcal{C}_h$ can be viewed as the Dirichlet ``boundary'' data.

\item In the discretization of the fractional Laplacian, the singular
  part is approximated by a scaled Laplace operator
    \cite{han2022monotone}. However, it is important to note that the
    regularity estimate for the obstacle problem, Proposition \ref{pp:interior-Holder},
    indicates that the true solution has only $C^{1,s}$ continuity
    over the boundary of the contact set. This limitation restricts
    the accuracy of such approximation.
\end{enumerate}

Based on the above, when discretizing the fractional Laplacian on
$\mathcal{N}_h^0\setminus \mathcal{C}_h$, it is necessary to restrict
the size of $H_i$ to ensure that $\Omega_i$ does not intersect with
the region relating to the discrete contact set $\mathcal{C}_h$.
Therefore, the following improved strategy is proposed:
\begin{equation} \label{eq:Hi-improved}
H_i = \min\{h_i^{\alpha_i}\delta_i^{1-\alpha_i},
  \theta\text{dist}(x_i, \mathcal{C}_h)\}  \quad \forall x_i \in
  \mathcal{N}_h^0,
\end{equation}
where $\theta$ is a constant that depends on the shape regularity of
the mesh. In the numerical experiments, $\theta$ is set to $1/4$.

Since the discrete contact set in policy iteration is continuously
updated, the improvement in the algorithm primarily lies in updating
the discretization of the fractional Laplacian through
\eqref{eq:Hi-improved} after updating the contact set. The improved
algorithm is outlined as follows.

\begin{algorithm}[htbp!] 
    \caption{Improved policy iteration for the fractional obstacle problem \eqref{eq:Gh}}\label{al:policy2}
    \KwIn{
    right hand side $f$, obstacle function $\psi$.}
    \KwOut{solution $u_h$.}
    \BlankLine
    Initialize $u_h^{(0)}(x_i) = \psi(x_i) \quad \forall x_i \in \mathcal{N}_h^0$;\\
    Initialize discrete fractional Laplacian: $(-\Delta)_h^{s,(0)} = (-\Delta)^s_h$.\\
    Iterate for $k \geq 0$:
    \begin{enumerate}
        \item Update discrete contact set $\mathcal{C}^{(k+1)}_h$:\\
        $\mathcal{C}^{(k+1)}_h:=\{x_i: (-\Delta)_h^{s,(0)}[u_h^{(k)}](x_i) - f(x_i) \geq u_h^{(k)}(x_i) -\psi(x_i) \}$.
         \item Update discrete fractional Laplacian:\\
        $(-\Delta)_h^{s,(k+1)}$ by choosing $H_i = \min\{h_i^{\alpha_i}\delta_i^{1-\alpha_i}, \theta\text{dist}(x_i, \mathcal{C}_h^{(k+1)})\}$.
        \item Update solution $u_h^{(k+1)}$ such that 
        \begin{equation} \label{eq:policy-update2}
        \begin{aligned}
            u_h^{(k+1)}(x_i) -\psi(x_i) &= 0\quad \forall x_i \in \mathcal{C}^{(k+1)}_h, \\
            (-\Delta)^{s,(k+1)}_h[u_h^{(k+1)}](x_i) -f(x_i) &=0 \quad \forall x_i \in \mathcal{N}_h^0\setminus \mathcal{C}^{(k+1)}_h.
        \end{aligned}
        \end{equation}
        \item If $u_h^{(k+1)}=u_h^{(k)}$, then stop; Otherwise, continue to Step 1.
    \end{enumerate}
\end{algorithm}

\begin{Remark}[convergence profile of Algorithm \ref{al:policy2}] \label{rk:policy2-convergence}
In the improved algorithm, it is important to note that the solutions
  in the consecutive steps correspond to different discretized
  fractional Laplacian operators, as seen in
  \eqref{eq:policy-update2}. Therefore, it is not possible to directly
  apply Theorem \ref{tm:policy-convergence} to provide a rigorous
  convergence analysis. However, it is worth noting that although the
  discretized fractional Laplacian operators vary across steps, they
  are all appropriate discretizations of the continuous operator and
  maintain monotonicity. From this perspective, the convergence
  behavior of the improved policy iteration algorithm should be
  similar to the original version. Indeed, this observation has been
  confirmed in numerical experiments, where the improved algorithm
  exhibits better performance, particularly near the contact
  interface.
\end{Remark}

\section{Numerical Experiments}  \label{sec:numerical}
In this section, several numerical experiments are conducted to test the stability and effectiveness of the methods, as well as the convergence of the improved policy iteration.

According to the discussion in Section \ref{subsec:regularity}, the fractional obstacle problem exhibits low regularity at the domain boundary, which is consistent with the fractional linear problem. To address this issue, various approaches have been proposed, including the use of graded grids. In this context, as discussed in \cite{acosta2017fractional, borthagaray2019weighted, han2022monotone}, the concept of {\it graded grids} is introduced with a parameter $h$. Let $\mu \geq 1$ be a constant such that for any $T\in \mathcal{T}_h$, 
\begin{equation}\label{eq:graded_grids}
    h_T \simeq
    \left\{
    \begin{aligned}
        &h^\mu\quad &\text{ if } T\cap \partial \Omega \neq \varnothing,\\
        &h\text{dist}(T,\partial \Omega)^{\frac{\mu-1}{\mu}} \quad &\text{ if } T \cap \partial \Omega = \varnothing.
    \end{aligned}
    \right.
\end{equation}  
It is worth noting that the quasi-uniform grid corresponds to the case where $\mu = 1$. Additionally, the choice of $\mu$ has an impact on both the grid quality and the overall computational complexity. The value of $\mu$ will be specified in each experiment.

In the discretization of the fractional Laplacian operator, the scale $H_i$ of the singular approximation part not only depends on the grid but also relies on $\alpha_i$, as shown in Equation \eqref{eq:Hi}. The optimal choice of $\alpha$ depends on the local smoothness of the solution or the smoothness of the forcing term $f$, as discussed in \cite{han2022monotone}. In the tests conducted in this section, a smooth $f$ is assumed, and based on the findings in \cite[Theorem 6.1]{han2022monotone}, the optimal choice of $\alpha$ is used, i.e., $\alpha = \frac12$.


\subsection{Convergence order test} \label{subsec:test1}
In this 1D test, the convergence rate is tested on both uniform and graded grids. The domain considered is $\Omega=(-1,1)$, and an explicit solution for problem \eqref{eq:fractional_obstacle} is constructed as follows,
$$
u(x) = \frac{2^{-2s}\Gamma(n/2)}{\Gamma(n/2+s)\Gamma(1+s)}(1-|x|^2)_+^s.
$$
A direct calculation shows that
$$(-\Delta)^s u=1\quad  x \in \Omega,\quad u=0\quad  x \in \Omega^c.$$
Next, the following forcing and obstacle terms are considered:
$$
f(x)=1-5(\frac{1}{2}-|x|)_+ \quad\text{and} \quad \psi = u(x)-\frac{1}{2}(x^2-\frac{1}{4})_+,
$$
so that $(-\Delta)^s u-f>0$ in $B_\frac{1}{2}(0)$ and $(-\Delta)^s u-f=0$ outside of this set. Consequently, $u$ represents the solution of problem \eqref{eq:fractional_obstacle}, and the contact set is $B_{\frac{1}{2}}(0)$.

\begin{figure}[h]
	\centering
	\includegraphics[scale=0.4]{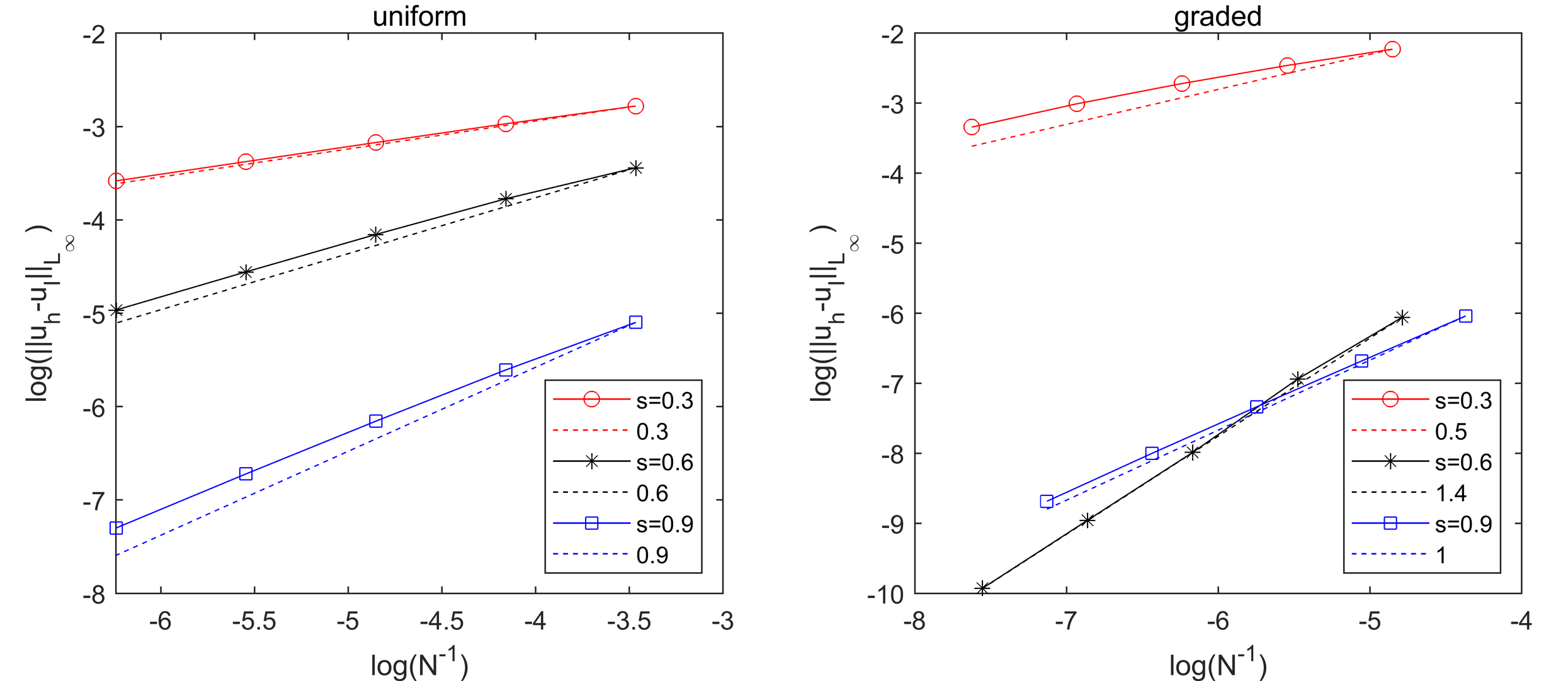}
	\caption{Experiment 1: Observed convergence rates for the discrete solutions of the fractional obstacle problems with $s=0.3, 0.6, 0.9$, computed over uniform and graded grids with $\mu = \frac{2-s}{s}$.}
 \label{figure:1derror}
\end{figure}

Figure \ref{figure:1derror} illustrates the convergence rates for both uniform and graded grids. According to \cite[Section 7.1]{han2022monotone}, the optimal choice for the pointwise estimate of the linear fractional Laplacian is $\mu = \frac{2-s}{s}$, which will also be used in this test. The convergence rate over uniform grids is observed to be approximately $s$, which is consistent with the behavior observed in the quasi-uniform grids for the linear case. Moreover, the graded grid yields a significantly improved convergence rate. It is worth noting that for the linear case, the convergence order under the optimal choice is $2 - s$, which is also observed for some cases (e.g., $s = 0.6$). However, for other cases, the strategy of choosing $H_i$ in \eqref{eq:Hi-improved} based on the local distance to the contact set may lead to different convergence rates compared to those analyzed for the linear case.

\subsection{Quantitative behavior and comparison of different policy iterations}
In this section, Problem \eqref{eq:fractional_obstacle} is investigated over the domain $\Omega=(-1,1)$. The force is set as $f=0$, and the obstacle function is given by
$$
\psi = 1-4|x-1/4|.
$$

\begin{figure}[h]
	\centering
	\includegraphics[scale=0.3]{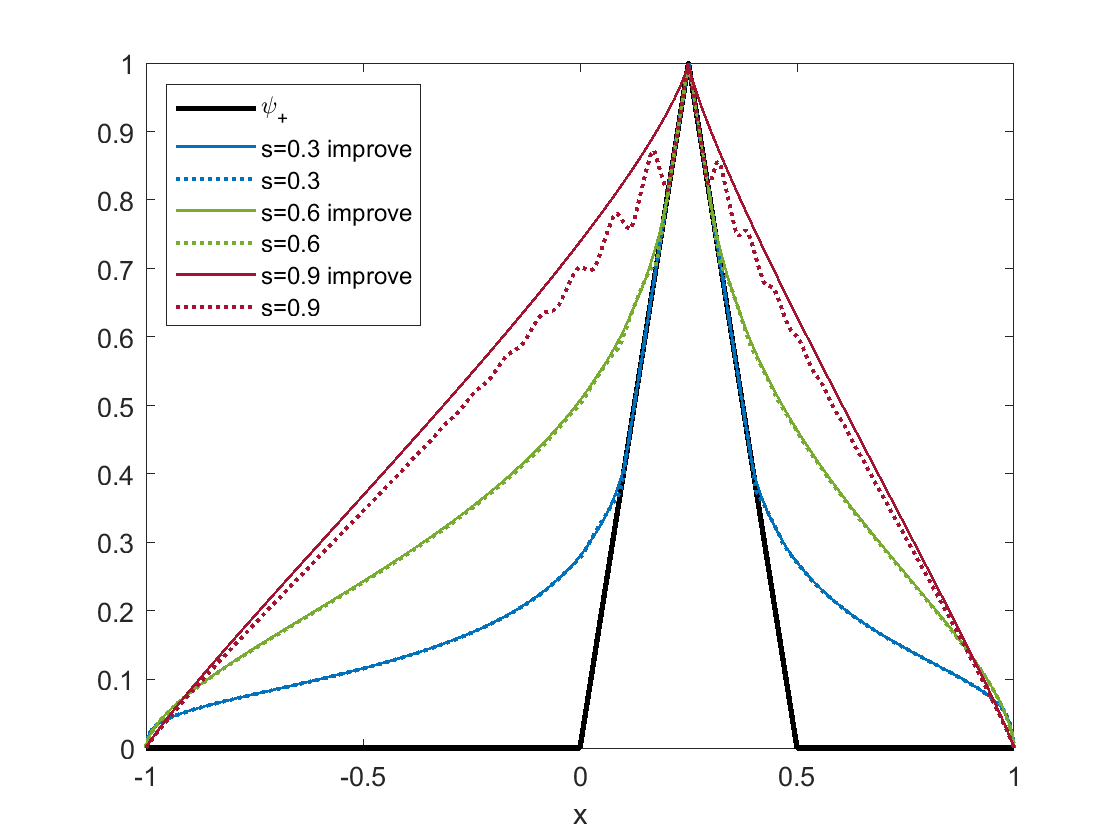}
	\caption{Numerical solutions to the fractional obstacle problem for $s = 0.3, 0.6, 0.9$, computed using the standard policy iteration (Algorithm \ref{al:policy}) and its improved version (Algorithm \ref{al:policy2}), respectively.}
 \label{figure:1duh}
\end{figure}

In Figure \ref{figure:1duh}, the numerical solutions $u_h$ obtained for different values of $s$ are presented below. A clear qualitative difference is observed between the solutions for different choices of $s$. When $s=0.9$, the discrete solution resembles the expected solution of the classical obstacle problem, where the operator $(-\Delta)^s$ is effectively replaced by $-\Delta$. As $s$ decreases, the solution approaches $\psi_+$. Throughout the experiments, a consistent observation is made that as $s$ increases, the contact set decreases and always contains the point $x_0$.

The difference between the improved method and the original method is remarkable. Observations reveal that when using the original method with larger values of $s$, oscillations occur near the free boundary due to the handling of the singular part across it.On the other hand, the improved method yields a numerically smoother solution without oscillations. As $s$ decreases, the solutions obtained by the two methods become closer to each other, and the differences diminish.


\subsection{Convergence history of improved policy iteration}
Next, the convergence history of the improved policy iteration (Algorithm \ref{al:policy2}) is examined. The experiment is conducted with $f=1$, and the obstacle function is defined as follows:
$$
\psi(x)= 3-6|x-1/4|.
$$

\begin{figure}[h]
	\centering
	\includegraphics[scale=0.66]{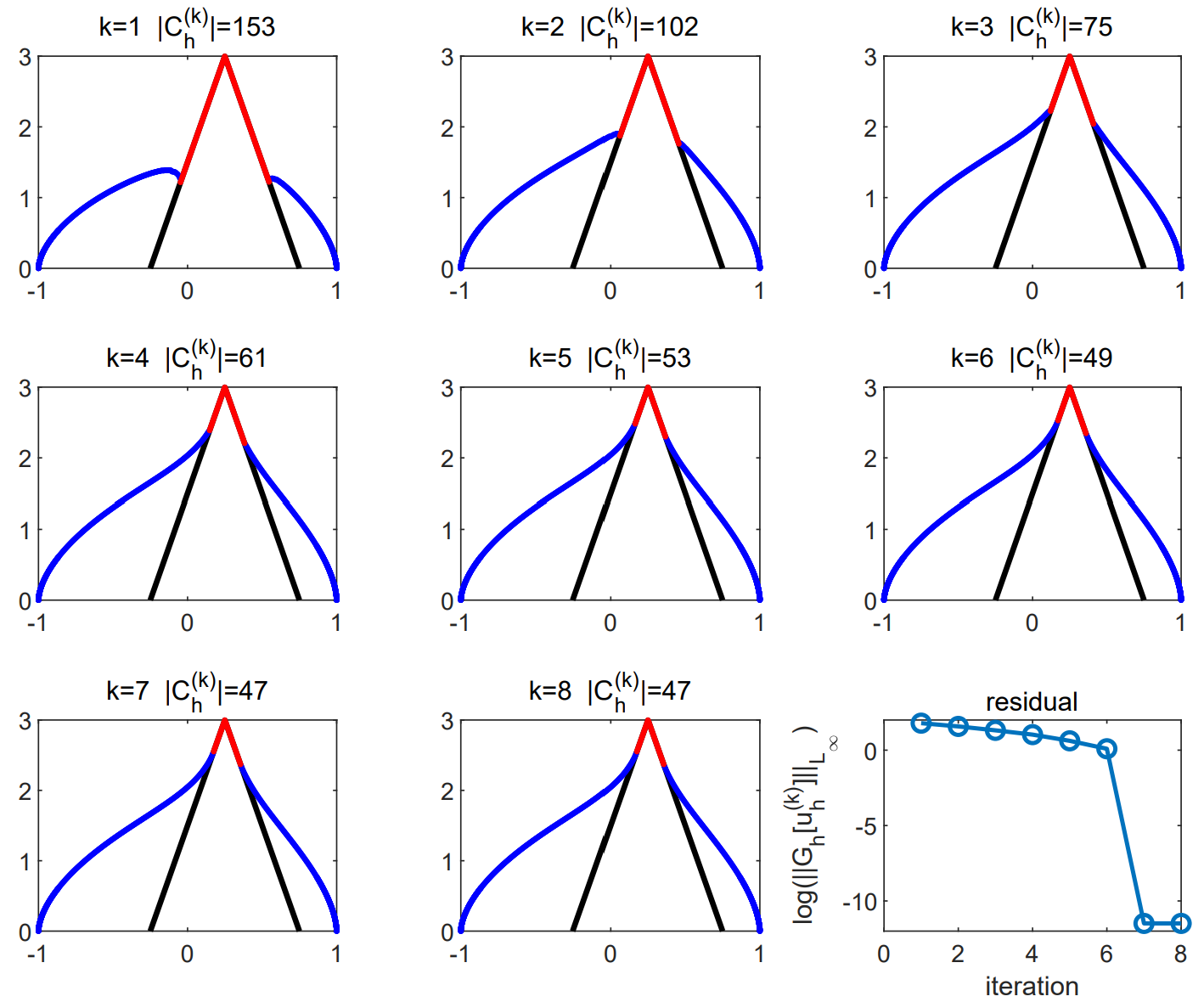}
	\caption{Convergence history of the improved policy iteration (Algorithm \ref{al:policy2}) with $s=0.6$ and 953 free vertices, including residual decay versus iterations (last picture). The red points indicate the contact set, and the blue points represent the non-contact set.}
 \label{fg:policy}
\end{figure}

As mentioned in Remark \ref{rk:policy2-convergence}, it is highly likely that the improved algorithm possesses the property of an increasing solution sequence (or decreasing discrete contact set) as stated in Theorem \ref{tm:policy-convergence} for the standard algorithm. In this experiment, as demonstrated in Figure \ref{fg:policy}, the convergence profile is observed through the values of $|\mathcal{C}_h^{(k)}|$. 
\begin{table} [h]
	\centering	
	\begin{tabular}{|c|c c c c|c c c c|c c c c| }  
		\hline
         & \multicolumn{4}{|c|}{$s=0.3$ }&\multicolumn{4}{|c|}{$s=0.6$}&\multicolumn{4}{|c|}{ $s=0.9$ }\\ \hline
		$N$ &126&254& 510&1022 &118&237 & 476&953 & 77& 155&311& 623\\ \hline  
	iterations&4&5&5 &6 & 5&6 & 7& 8& 6& 7&9&11  \\ \hline  
	\end{tabular}  
 \caption{Number of iterations for $s=0.3, 0.6, 0.9$ using graded grids ($\mu = \frac{2-s}{s}$), with different number of degrees of freedom (DOFs).}  
 \label{tb:policy}
\end{table}

Furthermore, the actual number of iterations obtained in the experiments is presented in Table \ref{tb:policy}. It can be observed that the actual number of iterations required is significantly smaller than the theoretical upper bound $N$. Additionally, as the grid is refined, the algorithm's performance remains stable. This demonstrates the efficiency of our algorithm.

\subsection{2D test}
Finally, problem \eqref{eq:fractional_obstacle} is considered in the domain $B_1(0)\subset \mathbb{R}^2$ with $f=0$, and the obstacle function is given by
$$
\psi = \frac{1}{2}-|x-x_0|,\quad  where\  x_0=(\frac{1}{4},\frac{1}{4}).
$$
\begin{figure}[h]
	\begin{minipage}[t]{0.45\linewidth}
		\centering
		\includegraphics[width=5cm,height=3.5cm]{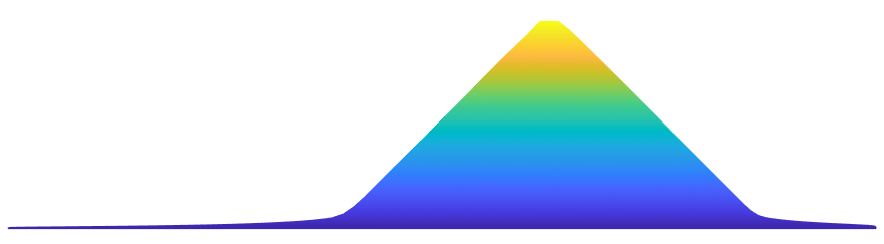}
	\end{minipage}
	\begin{minipage}[t]{0.45\linewidth} 
		\hspace{5mm}
		\includegraphics[width=5cm,height=3.5cm]{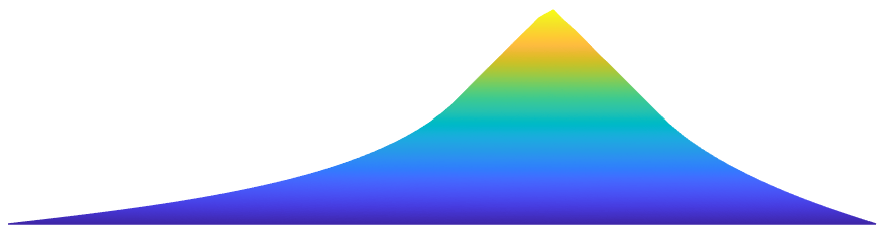}
	\end{minipage}
 
	\begin{minipage}[t]{0.45\linewidth}
		\centering
		\includegraphics[scale =0.5]{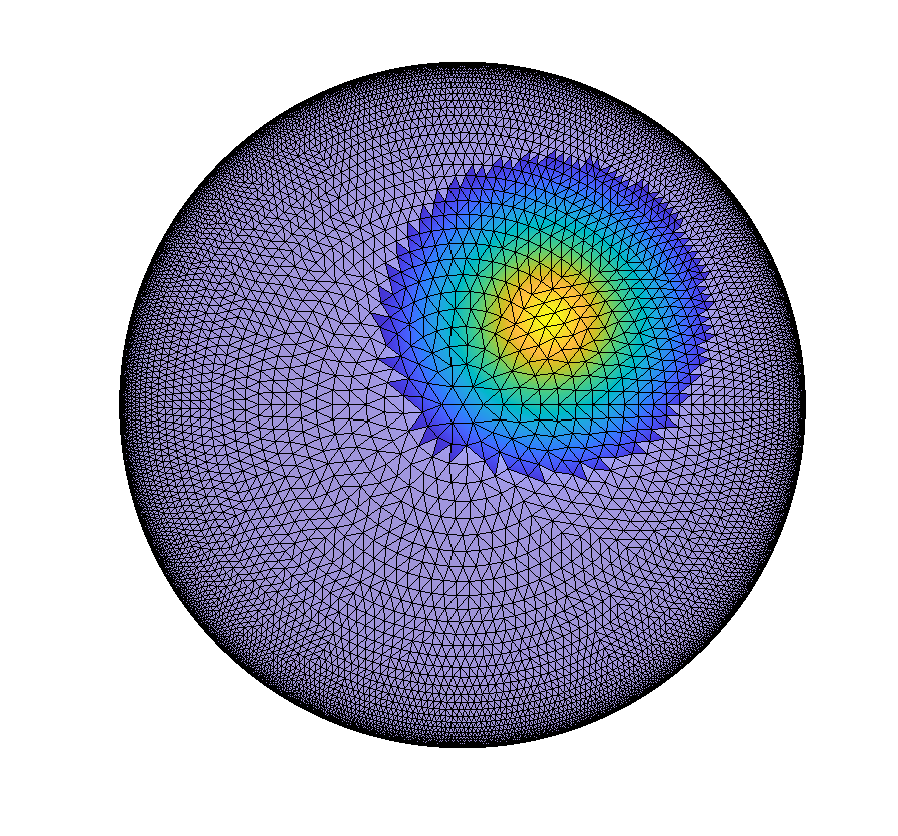}
	\end{minipage}
	\begin{minipage}[t]{0.45\linewidth} 
		\hspace{3mm}
		\includegraphics[scale =0.5]{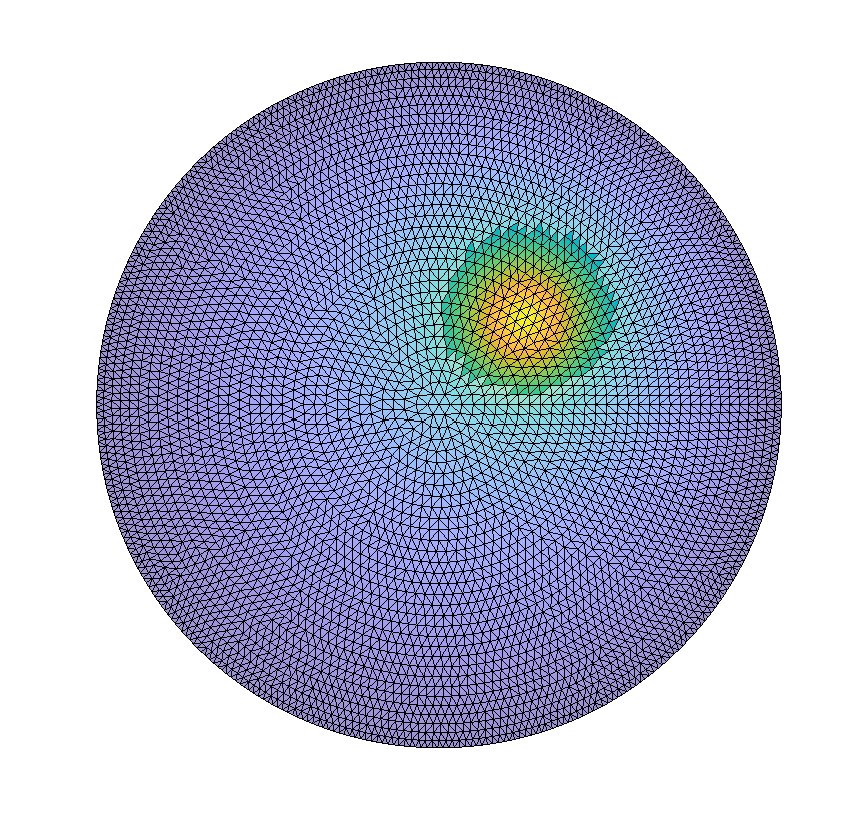}
	\end{minipage}
 \caption{Discrete solutions to the fractional obstacle problem for $s = 0.1$ (left) and $s = 0.9$ (right). Top: lateral view. Bottom: top view, with the discrete contact set highlighted.}
\end{figure}

For $s=0.1$, the numerical solution requires 13,395 degrees of freedom, and the number of improved policy iterations is 4. Conversely, for $s=0.9$, the numerical solution involves 4,253 degrees of freedom, and the number of improved policy iterations is 10. The number of iterations increases as $s$ grows, but significantly less than $N$, as also observed in the 1D case. As expected, the 2D numerical results demonstrate similar qualitative characteristics as observed in 1D.

\section{Conclusion}
\commenttwo{In this paper, a discrete scheme for the fractional obstacle problem is proposed, which is based on the monotone discretization for the fractional Laplace operator. 
Utilizing the distinctive structure of the problem \eqref{eq:Gh} and conducting a comprehensive study of the discrete operator, the study reveals that the nonlinear discrete operator $\mathcal{G}_h$ upholds the discrete comparison principle.
Based on this property, the existence, uniqueness, and uniform boundedness of numerical solutions are established.
}

\commenttwo{Due to the limited regularity of the true solution near the domain boundary, a graded grid has been introduced to capture this behavior. 
For solving nonlinear problems, the policy iteration method is used. Benefiting from the monotonicity of the discrete scheme of the fractional Laplace operator, the policy iterative process can converge in finite iterations.
Moreover, considering the reduced regularity of the actual solution near the contact set boundary, the discretization of the fractional Laplacian has been adaptively refined by iteratively updating contact nodes. This refinement has led to an improved policy iteration approach. 
In contrast to the conventional policy iteration, this improved method demonstrates superior numerical performance across a range of numerical experiments. One-dimensional and two-dimensional examples are provided to support the theoretical results.
}

\appendix
\section[\appendixname~\thesection]{proof of Proposition \ref{pp:boundary-Holder} under a weak assumption of $\psi$} \label{ap:holder}

Proposition \ref{pp:boundary-Holder} is proven, assuming $\psi \in L^\infty(\Omega)$. The proof follows closely the global H\"older estimate for the linear problem \cite{ros2014dirichlet}, so only the main thread will be sketched. First, some elementary tools are recalled.

\begin{Proposition}[Corollary 2.5 in \cite{ros2014dirichlet}]\label{pp:semi_holder_esti}
	Assume that $w\in C^\infty(\mathbb{R}^n)$ is a solution of $(-\Delta)^s w = h$ in $B_2$. Then for every $\beta \in(0,2s)$,
	\[
	\begin{aligned}
		\|w\|_{C^\beta({B_{1/2}})}\leq C \left(\|(1+|x|)^{-n-2s}w(x)\|_{L^1(\mathbb{R}^n)} + \|w\|_{L^\infty(B_2)} + \|h\|_{L^\infty(B_2)}\right)
	\end{aligned}
	\]
	where the constant $C$ depends only on $n, s$ and $\beta$.
\end{Proposition}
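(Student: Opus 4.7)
The plan is a localization-plus-Riesz-potential argument, split into a \emph{tail} piece and a \emph{core} piece. I would choose a cutoff $\eta\in C_c^\infty(\mathbb{R}^n)$ with $\eta\equiv 1$ on $B_{3/2}$ and $\mathrm{supp}\,\eta\subset B_{7/4}$, and decompose $w=w_1+w_2$ with $w_1:=\eta w$ and $w_2:=(1-\eta)w$. Since $w_2$ vanishes on $B_{3/2}$, for every $x\in B_1$ the singular integral defining $(-\Delta)^s w_2(x)$ is absolutely convergent and one has the pointwise bound
\[
\bigl|(-\Delta)^s w_2(x)\bigr|
\;\lesssim\;\int_{\mathbb{R}^n\setminus B_{3/2}}\frac{|w(y)|}{|x-y|^{n+2s}}\,\mathrm{d}y
\;\lesssim\;\bigl\|(1+|y|)^{-n-2s}\,w(y)\bigr\|_{L^1(\mathbb{R}^n)},
\]
using that $|x-y|\gtrsim 1+|y|$ whenever $x\in B_1$ and $y\notin B_{3/2}$. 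Differentiating under the integral sign yields the same control on all derivatives of $(-\Delta)^s w_2$ in $B_1$, so this piece is as smooth as needed and its $C^\beta$ norm is absorbed into the weighted $L^1$ term on the right-hand side.

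Next, $w_1$ is bounded, compactly supported in $B_{7/4}$, and satisfies
\[
(-\Delta)^s w_1 \;=\; h - (-\Delta)^s w_2 \;=:\; \tilde h \quad \text{in } B_1,
\]
where $\|\tilde h\|_{L^\infty(B_1)}$ is controlled by $\|h\|_{L^\infty(B_2)}$ plus the weighted tail norm. The problem therefore reduces to the purely \emph{interior} claim: for $v$ bounded with compact support and $(-\Delta)^s v\in L^\infty(B_1)$, one has $\|v\|_{C^\beta(B_{1/2})}\lesssim \|v\|_{L^\infty}+\|(-\Delta)^s v\|_{L^\infty(B_1)}$ for every $\beta<2s$. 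I would extract this via the Riesz potential representation on compactly supported data, writing $v=I_{2s}\!\ast\!g+r$ where $g=(-\Delta)^s v$ is extended by zero outside $B_1$ and $r$ is a residual that is fractional-harmonic on $B_1$ (hence smooth there by Lemma~\ref{lm:balayage} after a routine density argument). Classical Hölder estimates for the Riesz kernel $I_{2s}$ applied to bounded compactly supported data yield $I_{2s}\!\ast\!g\in C^\beta$ for every $\beta<2s$, which combined with the smoothness of $r$ gives the desired bound.

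Finally, putting the two pieces together yields $\|w\|_{C^\beta(B_{1/2})}\leq\|w_1\|_{C^\beta(B_{1/2})}+\|w_2\|_{C^\beta(B_{1/2})}$, and both terms have been estimated by the right-hand side of the stated inequality. The main obstacle is the approach to the endpoint $\beta=2s$: standard Riesz potential estimates saturate at $C^{2s-\varepsilon}$ for merely bounded right-hand sides, so one either accepts any $\beta<2s$ (as stated) and uses a direct double-integral Hölder estimate on $I_{2s}\!\ast\!g$, or one bootstraps by first establishing $C^{\beta'}$ for a small $\beta'$ and then iterating the equation to climb up to any $\beta<2s$ via the smoothing action of the inverse operator. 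The assumption $w\in C^\infty$ eliminates qualitative regularity issues and lets the argument proceed purely at the level of quantitative bounds.
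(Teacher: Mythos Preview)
The paper does not supply a proof of this proposition: it is quoted as Corollary~2.5 of \cite{ros2014dirichlet} and used as a black box in the appendix. There is therefore no ``paper's own proof'' to compare your proposal against.

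Your outline is the standard localization argument and matches in spirit how the result is obtained in the original reference. Two places deserve tightening. First, after writing $w_1 = I_{2s}\ast g + r$ with $r$ fractional-harmonic in $B_1$, you invoke Lemma~\ref{lm:balayage} for the regularity of $r$; but that lemma is purely qualitative. To close the estimate you need a \emph{quantitative} interior bound of the form $\|r\|_{C^\beta(B_{1/2})}\lesssim \|(1+|y|)^{-n-2s}r(y)\|_{L^1(\mathbb{R}^n)}$, which in \cite{ros2014dirichlet} comes from the explicit Poisson kernel on a ball and is logically prior to the corollary you are proving. Without it the argument is circular. Second, the Riesz potential step $I_{2s}\ast g$ requires care when $n=1$ and $s\geq 1/2$, since $|x|^{2s-n}$ is then not locally integrable near the origin (or grows); this is cosmetic and is handled either by working with second differences of the kernel or by the bootstrapping alternative you already mention. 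With those two points addressed, the decomposition goes through. Note also that $w_2\equiv 0$ on $B_{3/2}$, so $\|w_2\|_{C^\beta(B_{1/2})}=0$ outright; the role of the tail estimate is solely to control $(-\Delta)^s w_2$ inside $\tilde h$, not $w_2$ itself.
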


\begin{Lemma}[$L^\infty$ estimate of obstacle problem, \cite{musina2017variational}]\label{lm:Linfty_estimate}
 For $\psi \in L^\infty(\Omega)$ and $f = 0$, the solution $u$ of (\ref{eq:fractional_obstacle}) satisfies $u \in L^\infty(\Omega)$ with the following bounds:
	\[
	\max\{\psi,0\} \leq u \leq \|\max \{\psi,0\}\|_{L^\infty(\Omega)}.
	\]
\end{Lemma}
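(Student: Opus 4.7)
I would prove the two-sided bound through a variational truncation argument, exploiting that with $f = 0$ the solution $u$ is the unique minimizer of the fractional Dirichlet energy $\mathcal{E}(v) := \tfrac{1}{2}[v]_{H^s(\mathbb{R}^n)}^2$ over the admissible convex set
$\mathcal{K} := \{v \in H^s(\mathbb{R}^n) : v \geq \psi \text{ a.e.\ in } \Omega,\; v = 0 \text{ in } \Omega^c\}$,
which is nonempty because $\psi_+ := \max\{\psi,0\}$, extended by zero to all of $\mathbb{R}^n$, lies in $\mathcal{K}$. The analytic workhorse is the classical contraction inequality $[\Phi \circ v]_{H^s(\mathbb{R}^n)} \leq \mathrm{Lip}(\Phi)\,[v]_{H^s(\mathbb{R}^n)}$ for every $1$-Lipschitz $\Phi : \mathbb{R} \to \mathbb{R}$, which follows from the pointwise bound $|\Phi(a) - \Phi(b)| \leq |a - b|$ integrated against the Gagliardo kernel $|x-y|^{-n-2s}$.

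For the lower bound $u \geq \psi_+$, the inequality $u \geq \psi$ is already built into $u \in \mathcal{K}$. To upgrade this to $u \geq 0$, I would consider the truncation $u^+ := \max\{u, 0\}$ and verify admissibility: at each $x \in \Omega$, if $u(x) \geq 0$ then $u^+(x) = u(x) \geq \psi(x)$, while if $u(x) < 0$ the obstacle constraint forces $\psi(x) \leq u(x) < 0 = u^+(x)$; moreover $u^+ \equiv 0$ on $\Omega^c$. Applying the contraction inequality with $\Phi(t) = t_+$ yields $\mathcal{E}(u^+) \leq \mathcal{E}(u)$, and strict convexity of $\mathcal{E}$ on $\mathcal{K}$ (since $[\cdot]_{H^s}$ is a genuine norm on functions vanishing outside $\Omega$) forces $u^+ = u$, i.e., $u \geq 0$.

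For the upper bound, set $M := \|\psi_+\|_{L^\infty(\Omega)}$ and consider $\tilde u := \min\{u, M\}$. Admissibility holds since $M \geq \psi_+(x) \geq \psi(x)$ in $\Omega$ combined with $u \geq \psi$ yields $\tilde u \geq \psi$, while $\tilde u = 0$ on $\Omega^c$ because $M \geq 0$ and $u \equiv 0$ there. The contraction inequality with $\Phi(t) = \min\{t, M\}$ then gives $\mathcal{E}(\tilde u) \leq \mathcal{E}(u)$, and uniqueness again forces $\tilde u = u$, which is precisely $u \leq M$. The main obstacle will be the functional-analytic bookkeeping, namely verifying that $u \in H^s(\mathbb{R}^n)$ (which follows from the standard well-posedness theory for the fractional obstacle problem under $\psi \in L^\infty$) and that the two truncations remain in this space, together with justifying the passage from the pointwise Lipschitz bound to the Gagliardo seminorm. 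Once these prerequisites are in place, both bounds collapse to symmetric one-line applications of the same truncation principle.
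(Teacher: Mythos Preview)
Your variational truncation argument is correct. The paper, however, does not give a detailed proof at all: it simply states that the lemma ``follows from the comparison principle'' and cites the original reference. In that route the lower bound $u\geq 0$ comes from the weak maximum principle for supersolutions (the equation $\mathcal{G}[u]=0$ forces $(-\Delta)^s u\geq 0$ in $\Omega$, and $u=0$ in $\Omega^c$), while the upper bound is obtained by observing that at any putative interior maximum above $M$ one has $u>\psi$, hence $(-\Delta)^s u=0$, which contradicts strict positivity of the nonlocal operator at a global maximum over $\Omega^c$. Your approach is genuinely different and more symmetric: both inequalities collapse to the same mechanism---replace $u$ by a $1$-Lipschitz truncation that remains admissible, apply the Gagliardo contraction inequality, and invoke uniqueness of the minimizer. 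The comparison-principle route fits the PDE toolkit used elsewhere in the paper and needs no Hilbert-space structure, whereas your energy argument is purely variational and avoids pointwise reasoning. The one point to watch is the claim that $\psi_+\in\mathcal{K}$: for $\psi$ merely in $L^\infty(\Omega)$ there is no guarantee that $\psi_+$ has finite Gagliardo seminorm, so nonemptiness of $\mathcal{K}$ (and hence existence of a variational minimizer) is an additional hypothesis rather than a consequence; you have correctly flagged this as bookkeeping, but it should be stated as an assumption rather than derived.
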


The proof of Lemma \ref{lm:Linfty_estimate} follows from the comparison principle. As a direct consequence, the $L^\infty$ estimate of the fractional Laplace equation implies the following inequality for $f\in L^\infty(\Omega)$ and $\psi \in L^\infty(\Omega)$:
\[
\|u\|_{L^\infty(\Omega)}\leq C\left(\|f\|_{L^\infty(\Omega)} + \|\psi\|_{L^\infty(\Omega)}\right).
\]

\begin{Lemma}[supersolution, Lemma 2.6 in \cite{ros2014dirichlet}]\label{lm:supersolution}
	There exist $C_1>0$ and a radial continuous function $\varphi_1$ satisfying 
	\[
	\left\{
	\begin{aligned}
		&(-\Delta)^s \varphi_1 \geq 1 \quad &&x\in B_4\setminus B_1,\\
		&\varphi_1\equiv 0\quad &&x\in B_1,\\
		&0\leq \varphi_1 \leq C_1(|x|-1)^s\quad&& x\in B_4\setminus B_1,\\
		&1\leq \varphi_1 \leq C \quad&& x\in \mathbb{R}^n \setminus B_4.
	\end{aligned}
	\right.
	\]
\end{Lemma}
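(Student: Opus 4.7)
The plan is to construct $\varphi_1$ as an explicit radial function and then verify the supersolution inequality by carefully splitting the defining singular integral. The key observation is that for a function with a $(|x|-1)_+^s$ profile the fractional Laplacian naturally blows up like $(|x|-1)^{-s}$ as $|x|\downarrow 1$, which already yields $(-\Delta)^s\varphi_1\ge 1$ in a thin layer near $\partial B_1$; away from $\partial B_1$ the inequality is forced by a large constant tail in $\mathbb{R}^n\setminus B_4$.

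Concretely, I would fix a smooth nonincreasing cutoff $\eta\in C^\infty([0,\infty))$ with $\eta\equiv 1$ on $[0,2]$ and $\eta\equiv 0$ on $[3,\infty)$, and set
$$
\varphi_1(x) := c_0\,\eta(|x|)\bigl((|x|-1)_+\bigr)^s + M\bigl(1-\eta(|x|)\bigr),
$$
where $c_0>0$ and $M\ge 1$ will be chosen at the end. This function is radial and continuous, vanishes on $B_1$, equals $M$ on $\mathbb{R}^n\setminus B_4$, and satisfies $0\le\varphi_1(x)\le C_1(|x|-1)^s$ on $B_4\setminus B_1$ for a constant $C_1=C_1(c_0,M)$. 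The last three bulleted conditions in the lemma therefore hold by construction, so only the supersolution inequality $(-\Delta)^s\varphi_1\ge 1$ on $B_4\setminus B_1$ remains to be proved.

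For a point $x_0\in B_4\setminus B_1$ I would split the defining integral into the three pieces $I_{B_1}(x_0)$, $I_{B_4\setminus B_1}(x_0)$, and $I_{\mathbb{R}^n\setminus B_4}(x_0)$. Since $\varphi_1\equiv 0$ on $B_1$ and $\varphi_1(x_0)\ge 0$, one has $I_{B_1}(x_0)\ge 0$, and a routine annular estimate on the set $\{y\in B_1:|x_0-y|\le 2(|x_0|-1)\}$ gives the sharper lower bound $I_{B_1}(x_0)\gtrsim c_0(|x_0|-1)^{-s}$ as $x_0$ approaches $\partial B_1$. The far piece can be estimated from above by $-c_\star(M-\varphi_1(x_0))$ with $c_\star=c_\star(n,s)>0$, which is negative but bounded below by $-c_\star M+$ constants. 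The intermediate piece $I_{B_4\setminus B_1}(x_0)$ is the delicate term: on the outer layer $\{1+\delta_0\le|x_0|\le 4\}$, $\varphi_1$ is $C^{1,1}$ away from $\partial B_1$, so a standard Taylor-with-cancellation argument for radial functions produces a uniform bound $|I_{B_4\setminus B_1}(x_0)|\le C_2(\delta_0,c_0,M)$, obtained by reducing to the one-dimensional model kernel for $t\mapsto t_+^s$.

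Combining these estimates, on the inner layer $\{1<|x_0|<1+\delta_0\}$ the positive $I_{B_1}$ piece alone dominates and gives $(-\Delta)^s\varphi_1(x_0)\gtrsim c_0\delta_0^{-s}\ge 1$ by choosing $c_0$ small and $\delta_0$ correspondingly; on the outer layer the bounded intermediate term is absorbed by enlarging $M$, which pushes the far piece up and yields $(-\Delta)^s\varphi_1(x_0)\ge c_\star M - C_2 - c_\star C_1 3^s \ge 1$. The main obstacle is the uniform control of $I_{B_4\setminus B_1}$ precisely at the inner transition $|x_0|\approx 1+\delta_0$, where the profile has only $C^s$ regularity at $\partial B_1$ and the naive Taylor expansion breaks down; this is overcome by the explicit one-dimensional kernel estimate for $t_+^s$ together with a compactness argument on the closed annulus, following the line of \cite{ros2014dirichlet}.
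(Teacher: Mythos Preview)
The paper does not prove this lemma at all; it is quoted as Lemma~2.6 of \cite{ros2014dirichlet} and used as a black box in Appendix~\ref{ap:holder}. There is therefore no ``paper's proof'' to compare with, only the original reference.

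Your proposed argument, however, has a genuine sign error that breaks the outer-layer step. For $x_0$ with $1+\delta_0\le|x_0|\le 2$ your function satisfies $\varphi_1(x_0)=c_0(|x_0|-1)^s\le c_0$, which is independent of $M$, while $\varphi_1\equiv M$ on $\{|y|\ge 3\}$. Hence the tail contribution to $(-\Delta)^s\varphi_1(x_0)$ is
\[
C_{n,s}\int_{|y|>3}\frac{\varphi_1(x_0)-M}{|x_0-y|^{n+2s}}\,dy\;\le\;-c_\star(M-c_0),
\]
which becomes \emph{more negative} as $M\to\infty$. Enlarging $M$ therefore cannot ``push the far piece up''; your displayed bound $(-\Delta)^s\varphi_1(x_0)\ge c_\star M-C_2-\cdots$ has the wrong sign and the argument fails on the subannulus $1+\delta_0\le|x_0|\le 2$. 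The inner-layer mechanism is also too optimistic: in the half-space model $u(x)=(x_n)_+^s$ one has $(-\Delta)^s u\equiv 0$ in $\{x_n>0\}$, so the piece over $\{x_n<0\}$ (your $I_{B_1}$) and the piece over $\{x_n>0\}$ (your $I_{B_4\setminus B_1}$) both diverge like $d^{-s}$ with \emph{opposite} signs and cancel to leading order. Curvature of $\partial B_1$ and truncation add only bounded corrections, so $I_{B_1}$ never ``alone dominates'' --- the supersolution property near $\partial B_1$ in \cite{ros2014dirichlet} comes from exploiting this exact $s$-harmonicity of $t_+^s$ and arranging the remaining bounded correction to have the right sign, not from a blow-up versus bounded competition.
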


By the Lemma \ref{lm:Linfty_estimate} and Lemma \ref{lm:supersolution}, one could construct an upper barrier for $|u|$ by scaling and translating the supersolution.
The proof of Lemma \ref{lm:dist_dependence} is similar to the Lemma 2.7 in \cite{ros2014dirichlet}. The only difference is for each point $x_0 \in \partial \Omega$, when constructing the upper barrier on the ball touched $x_0$ from outside, the radius of the ball not only depends on the exterior ball condition of $\Omega$, but also depends on $r_0$ which appeared in the assumption \eqref{eq:ass-contact-dist} for the contact set.

\begin{Lemma}[$\delta(x)^s$-boundary behavior]\label{lm:dist_dependence}
	Let $\Omega$ be a bounded satisfying the exterior ball condition and let $f\in L^\infty(\Omega), \psi \in L^\infty(\Omega)$, u is the solution of (\ref{eq:fractional_obstacle}).
	Then 
	\[
	|u(x)|\leq C(\|f\|_{L^\infty(\Omega)}+\|\psi\|_{L^\infty(\Omega)})\delta^s(x),
	\]
	where $C$ is a constant depending only on $\Omega$, $s$ and $r_0$. Here, $\delta(x) := \mathrm{dist}(x, \partial \Omega)$. 
\end{Lemma}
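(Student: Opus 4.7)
The plan is to follow the barrier-construction proof of Lemma 2.7 in \cite{ros2014dirichlet}, with the single but crucial modification that every auxiliary ball must fit inside the ``free region'' $\Omega\setminus\Lambda$ where $u$ still satisfies the linear fractional Poisson equation $(-\Delta)^s u = f$ pointwise. First I would dispose of the easy regime $\delta(x)\geq r_1$ for some threshold $r_1>0$ depending only on $\Omega$, $s$, and $r_0$: in this range the global $L^\infty$ bound noted right after Lemma \ref{lm:Linfty_estimate} yields $|u(x)|\leq C(\|f\|_{L^\infty(\Omega)}+\|\psi\|_{L^\infty(\Omega)})\leq Cr_1^{-s}(\|f\|_{L^\infty(\Omega)}+\|\psi\|_{L^\infty(\Omega)})\delta(x)^s$.

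Now fix $x\in\Omega$ with $\delta(x)<r_1$, pick $x_0\in\partial\Omega$ realizing $\delta(x)=|x-x_0|$, and invoke the exterior ball condition to obtain $B_\rho(y_0)\subset\Omega^c$ tangent to $\partial\Omega$ at $x_0$ with
\[
\rho:=\min\left\{\rho_\Omega,\;\tfrac{r_0}{4}\right\},
\]
where $\rho_\Omega$ is the uniform exterior-ball radius associated with $\Omega$. Because every point of $B_{4\rho}(y_0)\cap\Omega$ lies within distance $3\rho\leq 3r_0/4$ of $\partial\Omega$, the contact-set assumption \eqref{eq:ass-contact-dist} forces $B_{4\rho}(y_0)\cap\Lambda=\varnothing$, so $u$ solves $(-\Delta)^s u=f$ pointwise throughout $D:=B_{4\rho}(y_0)\cap\Omega$. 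This localization is the only place the obstacle structure is used.

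Next I would rescale the supersolution of Lemma \ref{lm:supersolution} by setting $\varphi_\rho(z):=\varphi_1((z-y_0)/\rho)$, so that $(-\Delta)^s\varphi_\rho\geq\rho^{-2s}$ in $B_{4\rho}(y_0)\setminus B_\rho(y_0)$, $\varphi_\rho\equiv 0$ on $B_\rho(y_0)$, $\varphi_\rho(z)\leq C_1\rho^{-s}(|z-y_0|-\rho)^s$ in the annulus, and $\varphi_\rho\geq 1$ outside $B_{4\rho}(y_0)$. Define the barrier $\bar u:=K\varphi_\rho$ with
\[
K:=\max\left\{\rho^{2s}\|f\|_{L^\infty(\Omega)},\;\|u\|_{L^\infty(\Omega)}\right\},
\]
which is controlled by a constant multiple of $\|f\|_{L^\infty(\Omega)}+\|\psi\|_{L^\infty(\Omega)}$ via the global $L^\infty$ bound after Lemma \ref{lm:Linfty_estimate}. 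By construction $(-\Delta)^s\bar u\geq\|f\|_{L^\infty(\Omega)}\geq|f|$ throughout the annulus, while on $D^c$ one checks case by case that $\bar u\pm u\geq 0$: in $B_\rho(y_0)$ both $u$ and $\bar u$ vanish, in $B_{4\rho}(y_0)\cap\Omega^c$ again $u=0\leq\bar u$, and outside $B_{4\rho}(y_0)$ one has $\bar u\geq K\geq\|u\|_{L^\infty(\Omega)}$. The linear comparison principle for $(-\Delta)^s$ on $D$, applied separately to $\bar u+u$ and $\bar u-u$, then yields $|u|\leq\bar u$ on all of $D$.

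Evaluating at $x$ and using $|x-y_0|-\rho\leq|x-x_0|=\delta(x)$ from the triangle inequality gives
\[
|u(x)|\leq K\varphi_\rho(x)\leq KC_1\rho^{-s}\delta(x)^s\leq C(\Omega,s,r_0)\bigl(\|f\|_{L^\infty(\Omega)}+\|\psi\|_{L^\infty(\Omega)}\bigr)\delta(x)^s,
\]
which is the asserted bound. The main obstacle, and the only substantive departure from \cite{ros2014dirichlet}, is the quantitative choice of $\rho$: without the uniform gap \eqref{eq:ass-contact-dist} one cannot guarantee that the tangent-ball neighborhood avoids $\Lambda$, the linear equation $(-\Delta)^s u=f$ does not hold there, and the barrier argument breaks down. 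Once $\rho\leq r_0/4$ is enforced, the rest of the argument is a routine scaling of the standard linear barrier construction.
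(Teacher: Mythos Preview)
Your proposal is correct and follows essentially the same approach the paper sketches: the paper only indicates that the proof mimics Lemma~2.7 of \cite{ros2014dirichlet}, with the single modification that the radius of the exterior tangent ball must also be chosen small in terms of $r_0$ so that the barrier construction stays inside the region where the linear equation $(-\Delta)^s u = f$ holds. You have carried out exactly this, with the explicit choice $\rho=\min\{\rho_\Omega,r_0/4\}$ and a clean verification that $B_{4\rho}(y_0)\cap\Lambda=\varnothing$; the remaining steps (rescaling $\varphi_1$, checking the sign of $\bar u\pm u$ on $D^c$, and applying the linear comparison principle) are the standard ones from \cite{ros2014dirichlet}. One small point worth making explicit is that your threshold $r_1$ should be taken no larger than $3\rho$ so that the point $x$ actually lies in $B_{4\rho}(y_0)$, but this is implicit in your setup.
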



The following H\"older seminorm estimate is provided by the above Lemma, which plays a vital role in the proof of Proposition \ref{pp:boundary-Holder}.

\begin{Proposition}[improved interior H\"older estimate] \label{lm:improved-interior-Holder}
	Let $\Omega$ satisfies the exterior ball condition, $f,\psi \in L^\infty(\Omega)$, u is the solution of (\ref{eq:fractional_obstacle}), then $u\in C^\beta(\Omega\setminus\Lambda), \;\beta \in (0,2s)$ and for all $x_0\in \Omega\setminus \Lambda$ wa have the following seminorm estimate in $B_R(x_0) := B_{\delta^*(x_0)/2}(x_0)$ 
	\[
	{|u|}_{C^\beta({B_R(x_0)})}\leq C(\|\psi\|_{L^\infty(\Omega)} + \|f\|_{L^\infty(\Omega)}) R^{s-\beta},
	\]
	where $\delta^*(x):=\min\{\delta(x),\text{dist}(x,\Lambda)\}$ and the constant $C$ depending only on $\Omega, s$, $\beta$ and $r_0$.
\end{Proposition}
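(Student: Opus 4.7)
The plan is to follow the linear-case strategy of \cite{ros2014dirichlet}, with Lemma \ref{lm:dist_dependence} (the $\delta^s$-decay for the obstacle problem) playing the role of the linear boundary estimate. The argument has three stages: localize the equation to a ball where the obstacle is inactive, rescale to a unit ball and invoke the linear interior Hölder estimate in Proposition \ref{pp:semi_holder_esti}, and finally control the three resulting norms by $R^s$ via a dyadic decomposition.

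First I would localize. Fix $x_0 \in \Omega \setminus \Lambda$ and set $R := \delta^*(x_0)/2 > 0$. By the very definition of $\delta^*$, the ball $B_{2R}(x_0)$ is contained in $\Omega$ and is disjoint from $\Lambda$, so $u > \psi$ throughout $B_{2R}(x_0)$; consequently the minimum in \eqref{eq:fractional_obstacle} is attained by its first entry and $u$ satisfies the linear equation $(-\Delta)^s u = f$ classically in $B_{2R}(x_0)$. Then I would rescale by setting $v(y) := u(x_0 + R y)$, so that $(-\Delta)^s v(y) = h(y) := R^{2s} f(x_0 + R y)$ in $B_2$, with $\|h\|_{L^\infty(B_2)} \le R^{2s}\|f\|_{L^\infty(\Omega)}$. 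Applying Proposition \ref{pp:semi_holder_esti} to $v$ (after standard mollification and passing to the limit by stability) yields, for every $\beta \in (0,2s)$,
\[
\|v\|_{C^\beta(B_{1/2})} \le C \Big( \|(1+|y|)^{-n-2s} v\|_{L^1(\mathbb{R}^n)} + \|v\|_{L^\infty(B_2)} + R^{2s}\|f\|_{L^\infty} \Big).
\]

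The crux is then the bound $\|v\|_{L^\infty(B_2)} + \|(1+|y|)^{-n-2s} v\|_{L^1(\mathbb{R}^n)} \le C (\|f\|_{L^\infty} + \|\psi\|_{L^\infty}) R^s$, which together with the trivial bound on $\|h\|_{L^\infty}$ gives $\|v\|_{C^\beta(B_{1/2})} \lesssim (\|f\|_{L^\infty}+\|\psi\|_{L^\infty}) R^s$; undoing the scaling and covering $B_R(x_0)$ by finitely many balls of radius $R/2$ then delivers the claimed $R^{s-\beta}$ estimate. For the $L^\infty$ piece I would invoke Lemma \ref{lm:dist_dependence}: on $B_{2R}(x_0)$ one has $\delta(x) \le \delta(x_0) + 2R$, and in the regime $R = \delta(x_0)/2$ this gives $\delta(x) \le 4R$, whence $|u(x)| \lesssim R^s$. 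For the weighted $L^1$ tail I would split $\mathbb{R}^n$ into the dyadic annuli $A_k = B_{2^{k+1}R}(x_0) \setminus B_{2^k R}(x_0)$, bound $|u(x)| \lesssim \delta(x)^s \lesssim (2^k R)^s$ and $(1+|y|)^{-n-2s} \lesssim 2^{-k(n+2s)}$ on the corresponding scaled annulus (whose volume is $\lesssim (2^k R)^n$), and sum the resulting geometric series $\sum_k 2^{-ks}$, which converges precisely because $s > 0$.

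The main technical obstacle I anticipate is uniformly handling the two regimes hidden in the definition $R = \tfrac12\min\{\delta(x_0),\mathrm{dist}(x_0,\Lambda)\}$. When $\delta(x_0) \le \mathrm{dist}(x_0,\Lambda)$, the argument above goes through directly; when instead $\mathrm{dist}(x_0,\Lambda) < \delta(x_0)$, the boundary $\delta^s$-decay alone does not furnish the $R^s$ smallness of $\|u\|_{L^\infty(B_{2R}(x_0))}$, and one must additionally exploit the standing assumption $\mathrm{dist}(\Lambda,\partial\Omega) \ge r_0 > 0$ from \eqref{eq:ass-contact-dist}, together with $u = \psi$ on $\Lambda$ and the nondegeneracy of $\psi$, to recover the correct scaling factor. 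Tracking the dependence of the constants on $r_0$ in this second regime is precisely what justifies the $r_0$-dependence announced in the statement; the localization, rescaling, and dyadic summation are otherwise routine once this $L^\infty$ bound is in hand.
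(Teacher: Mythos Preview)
Your proposal follows the paper's proof essentially verbatim: localize to the non-contact region, rescale, invoke Proposition~\ref{pp:semi_holder_esti}, and control the three resulting terms via Lemma~\ref{lm:dist_dependence}, with your dyadic tail summation being a more explicit variant of the paper's direct pointwise bound $|\tilde u(y)|\le CR^s(1+|y|^s)$. Your caveat about the regime $\mathrm{dist}(x_0,\Lambda)<\delta(x_0)$ is in fact more careful than the paper's argument, which in deriving \eqref{eq:cond2} tacitly uses $\delta(x_0)\lesssim R$ and does not treat that case separately; as you indicate, the assumption \eqref{eq:ass-contact-dist} is what ultimately accounts for the $r_0$-dependence of the constant.
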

\begin{proof}
Using the standard mollifier technique, it can be assumed that $u$ is smooth. Note that $B_R(x_0)\subset B_{2R}(x_0) \subset \Omega.$ Let $\tilde{u}(y) := u(x_0 + Ry)$. The equation for $\tilde{u}$ is given by:
	\begin{equation}\label{eq:cond1}
		\begin{aligned}
			(-\Delta)^s \tilde{u}(y) = R^{2s} f(x_0+Ry)\quad \forall y \in B_1.
		\end{aligned}
	\end{equation}
 Furthermore, by employing the inequality $|u(x)|\leq C\left(|u|{L^\infty(\mathbb{R}^n)} + |f|{L^\infty(\Omega)}\right)\delta^s(x)$ in $\Omega$ (see Lemma \ref{lm:dist_dependence}), the following result can be deduced:
	\begin{equation}\label{eq:cond2}
		\|\tilde{u}\|_{L^\infty(B_1)}\leq C \left(\|u\|_{L^\infty(\mathbb{R}^n)} + \|f\|_{L^\infty(\Omega)}  \right)R^s.
	\end{equation}
	Further, \eqref{eq:cond2} and Lemma \ref{lm:dist_dependence} also imply that 
	$$
	|\tilde{u}(y)|\leq C\left(\|u\|_{L^\infty(\mathbb{R}^n)} + \|f\|_{L^\infty(\Omega)}+\|\psi\|_{L^\infty(\Omega)}\right)R^s(1+|y|^s)\quad \forall y \in \mathbb{R}^n,
	$$ 
	whence
	\begin{equation}\label{eq:cond3}
		\|(1+|y|)^{-n-2s} \tilde{u}(y)\|_{L^1(\mathbb{R}^n)} \leq C\left(\|u\|_{L^\infty(\Omega)}+\|f\|_{L^\infty(\Omega)}\right)R^s.
	\end{equation}
	
	Now, one can use the Proposition \ref{pp:semi_holder_esti}, which the $C^{\beta}$ seminorm of $\tilde{u}$ can be bounded by \eqref{eq:cond1}, \eqref{eq:cond2} and \eqref{eq:cond3}, and obtain 
	\[
	\|\tilde{u}\|_{C^\beta({B_{1/4}})}\leq C(\|\psi\|_{L^\infty(\Omega)}+\|f\|_{L^\infty(\Omega)})R^s
	\]
	for all $\beta \in(0,2s)$, where $C = C(\Omega, s,\beta,r_0)$ and Lemma \ref{lm:Linfty_estimate} is used to bound $\|u\|_{L^\infty(\Omega)}$.
 \commenttwo{
Finally, the relationship
	\[
	|u|_{C^\beta({B_{R/4}(x_0)})} = R^{-\beta}{|\tilde{u}|}_{C^\beta({B_{1/4}})},
	\]	
implies that
 \[
 |u|_{C^\beta({B_{R/4}(x_0)})}\leq C(\|\psi\|_{L^\infty(\Omega)}+\|f\|_{L^\infty(\Omega)})R^{s-\beta}.
 \]
 Hence, by a standard covering argument, it can be found the $C^\beta$ seminorm of $u$ in ${B_R(x_0)}$.
 }
\end{proof}

Having Proposition \ref{lm:improved-interior-Holder} (improved interior H\"older estimate), the desired Proposition \ref{pp:boundary-Holder} can be proved using the same technique that appeared in the proof of Proposition 1.1 in \cite{ros2014dirichlet} provided $\beta = s$.

\bibliography{frac}
\bibliographystyle{plain}

\end{document}